\documentclass[11pt]{amsart}

\usepackage{amsmath,amsthm,dsfont}
\usepackage{txfonts}
\usepackage{amssymb}
\usepackage{graphicx}
\usepackage{cite}
\usepackage{pstricks,pst-all}
\usepackage[utf8]{inputenc}
\usepackage{subfigure}
\usepackage[T1]{fontenc}
\usepackage{multirow}
\usepackage{listings}
\lstloadlanguages{Matlab} 
\usepackage{lmodern}
\usepackage{float}
\usepackage{color}
\usepackage{mathtools}
\usepackage{graphics}
\usepackage[english]{babel}
\usepackage{url}
\usepackage{grffile} 
\newtheorem{theorem}{Theorem}[section]
\newtheorem{corollary}[theorem]{Corollary}
\newtheorem{lemma}[theorem]{Lemma}
\newtheorem{proposition}[theorem]{Proposition}
\theoremstyle{definition}
\newtheorem{definition}[theorem]{Definition}

\newtheorem{remark}[theorem]{Remark}
\newcommand{\dd}{\mathrm{d}}
\newcommand{\e}{\mathrm{e}}

\newcommand{\PP}[2]{{\mathbb{P}}}

\renewcommand{\P}{\mathbb{P}}
\newcommand{\N}{\mathbb{N}}
\newcommand{\R}{\mathbb{R}}
\newcommand{\FF}{\mathcal{F}}

\numberwithin{equation}{section}

\title[Noise-induced strong stabilization ]{Noise-induced strong stabilization}

\author[M. Leimbach]{Matti Leimbach}
\address[M. Leimbach]{Technische Universit\"at Berlin, MA 7-5, Str.~des 17.~Juni 136, 10623 Berlin, Germany}
\email{{\tt leimbach.matti@googlemail.com}}

\author[J.C. Mattingly]{Jonathan C. Mattingly}
\address[J.C. Mattingly]{Duke University, Box 90320, Durham NC, USA}
\email{\tt jonm@math.duke.edu}

\author[M. Scheutzow]{Michael Scheutzow}
\address[M. Scheutzow]{Technische Universit\"at Berlin, MA 7-5, Str.~des 17.~Juni 136, 10623 Berlin, Germany}
\email{\tt ms@math.tu-berlin.de}

\keywords{Stochastic differential equation, invariant measure, random attractor, random dynamical system, stabilization, semi-Markov process}

\subjclass[2010]{60H10,60K15}


\begin{document}

\begin{abstract}
  We consider a 2-dimensional stochastic differential equation in polar coordinates depending on several parameters. We show that if these parameters belong to a specific regime then the deterministic
  system explodes in finite time, but the random dynamical system corresponding to the stochastic equation is not only strongly complete but even admits a random attractor. 
\end{abstract}

\maketitle


\section{Introduction}
It is known that the addition of noise can stabilize an explosive ordinary differential equation (ODE) such that it becomes a non-explosive stochastic differential equation (SDE).
For examples, see \cite{Scheutzow93},\cite{Doering12}, \cite{Kolba11}, \cite{Herzog11},
\cite{HerzogMattingly1}, \cite{HerzogMattingly2} and \cite{Kolba17}.
This phenomenon is often called {\em noise-induced stability} or {\em noise-induced stabilization}, if, in addition, the corresponding Markov process admits an invariant probability measure. 
We investigate whether the noise can  induce an even stronger kind of stability, namely the existence of a random attractor. We call such a phenomenon {\em noise-induced strong stabilization}.
The existence of a random attractor implies non-explosion and the existence of an invariant distribution, but not vice versa, see \cite{ochs99}. Note that
\begin{itemize}
    \item [$\bullet$] {\em Stabilization} implies infinite-time existence and statistical recurrence (given by the invariant probability measure) of the solution of individual trajectories. 
    \item [$\bullet$] {\em Strong stabilization} implies infinite-time existence and statistical recurrence (given by the law of the random attractor) of the set-valued solution of arbitrarily large bounded sets of initial conditions.
\end{itemize}

In \cite{LeimbachScheutzow14} the authors show that a certain family of SDEs, which exhibits noise-induced stabilization as shown in \cite{HerzogMattingly1} and \cite{HerzogMattingly2},
is not strongly complete, i.e.~there exist (random) initial conditions for which the 
solutions explode in finite time. In particular, there is no random attractor.
In this paper, we provide a positive answer to the question whether noise-induced strong stabilization is possible. Our result, contained in this note,  seems the first time  noise-induced strong stabilization has been rigorously proven.

We consider the following 2-dimensional SDE in polar coordinates
\begin{equation}
\label{eq:driving}
\begin{split}
\mathrm{d}r_t &=\left(-r_t^w\cos^2(\phi_t)+r^v_t\right)\mathrm{d}t \text{,}\\
\mathrm{d}\phi_t &= -r_t^\gamma\cos^2(\phi_t)\mathrm{d}t +\sigma \mathrm{d}W_t \text{,}
\end{split}
\end{equation}
where $w,v,\gamma>0, \sigma \geq 0$ and $(W_t)_{t\geq 0}$ is a standard one-dimensional Brownian motion. Since the equation is $\pi$-periodic in $\phi$, we work with a periodic boundary condition for the angular component $(\phi_t)_{t\geq 0}$. Hence, the state space reduces to $[0,\infty)\times [0,\pi]$. 

Explosion, existence of an invariant distribution or existence of a random attractor highly depend on the {\em radial process} $(r_t)_{t\geq 0}$. Therefore, it is advantageous to work with polar coordinates providing an equation for the radial component.
Our aim is to find values for $w,v,\gamma>0$, such that in the deterministic case, i.e. $\sigma=0$, the solution explodes in finite time for some or almost all initial conditions, while in the stochastic case, i.e. $\sigma>0$, the random dynamical system corresponding to (\ref{eq:driving}) admits a random attractor.

The usual criteria to prove the existence of a random attractor like monotonicity (see \cite{Scheutzow08}, \cite{CS04}, \cite{FGS17b}) or a drift towards the origin  (see \cite{DimiScheu11}) cannot be applied to our system. Instead we will construct an embedded Semi-Markov process which dominates the radial part and which admits an invariant probability measure and thus guarantees existence of a random attractor.

\section{Overview and heuristics}

We begin by discussing, at an informal level, the deterministic dynamics to better illuminate the source of the instability in the deterministic problem and how noise stabilizes the dynamics. We will also contrast this system with those considered in \cite{Herzog11,Kolba11,HerzogMattingly1, HerzogMattingly2} to better understand why a random attractor exists in this problem but not those examples.

\begin{figure}
    \centering
 \includegraphics[width=0.7\textwidth]{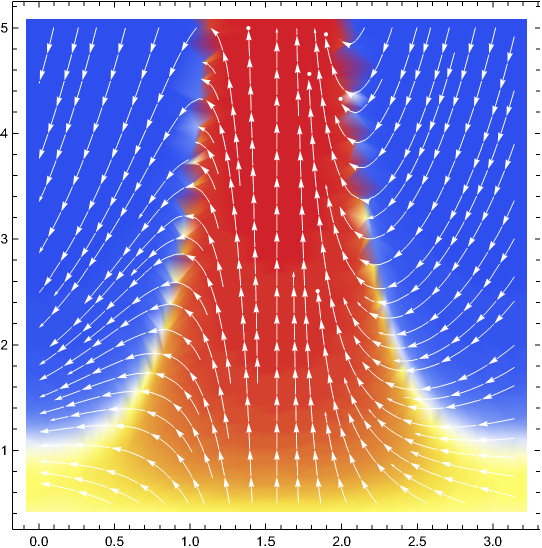}
    \caption{Plot of vector-field generated by the deterministic part of \eqref{eq:driving} with $\gamma=7/4$, $w=3$, and  $v=2$ over $\phi \in [0,\pi]$ (the horizontal axis) and $r \in [1,5]$ (the vertical axis). The red region is where the radial component of the vector field is positive and the blue is where it is negative. The yellow regions are those where the radial component is  close to zero.   }
    \label{fig:det}
\end{figure}

Figure~\ref{fig:det}  gives the vector field described by the deterministic portion of \eqref{eq:driving}. 
We have placed the angular coordinate $\theta$ on the horizontal axis and the radius on the vertical axis. Above all we are interested in vertical paths to infinity as those represent ``blow-up'' paths along which the dynamics can escape to infinity. 
To make the structure clearer, we have colored the regions blue where the vector field points towards smaller $r$ and red if it points to  larger $r$. (Regions where the radial vector field is near zero are colored yellow.) Hence the red channel is the dangerous zone. We will see that trajectories spending too much time in this region will blow up. 
When $\phi=\pi/2$, the deterministic $r$ dynamics reduce to $\dot r= r^v$ and the deterministic $\phi$ dynamics to $\dot \phi=0$. When $v >1$, this dynamics blows up in finite time. 

Just to the right of these $\phi$, the $\phi$ dynamics moves the trajectory towards $\pi/2$ but never crosses the lines $\phi=\pi/2$. In contrast, away from the red region  the trajectories lead towards smaller values of $r$. Since, in this note, we are primarily interested in whether the dynamics escapes to infinite it is useful to consider the structure of the dynamics for $r\gg 1$. Notice that $\dot r$ is negative except when $\cos(\phi)^2 \in[- r^{v - w}, r^{v - w}]$.  We will mainly be interested in the setting where $w>v$ so that the size of this region is shrinking as $r \rightarrow \infty$. Since it is reasonable to approximate $\cos(\phi)$ around $\pi/2$ by $\pi/2-\phi$ and since the deterministic $\phi$ dynamics causes the system to exit the  portion to the left of $\pi/2$, we will be interested in the time spent in the intervals $\pi/2+ [0, r^\frac{v - w}2]$. 

As already noted, the $\phi$ deterministic dynamics will not leave this region. However, when the noise is present it is reasonable to hope that is might leave this critical region fast enough to ensure the system does not blow up.  
Figure~\ref{fig:stochastic} gives numerical evidence supporting this hope. The left panel shows the stochastic dynamics starting from $(\phi,r)$ equal to $(\pi/8,5)$ and $(0,1)$. 
Note, that we have extended the dynamics to the full angular interval $[0,2\pi]$ for visual simplicity.
The right panel shows the corresponding time series of the radius. Both trajectories are using the same noise realization.
\begin{figure}
    \centering
    \includegraphics[width=0.46\textwidth]{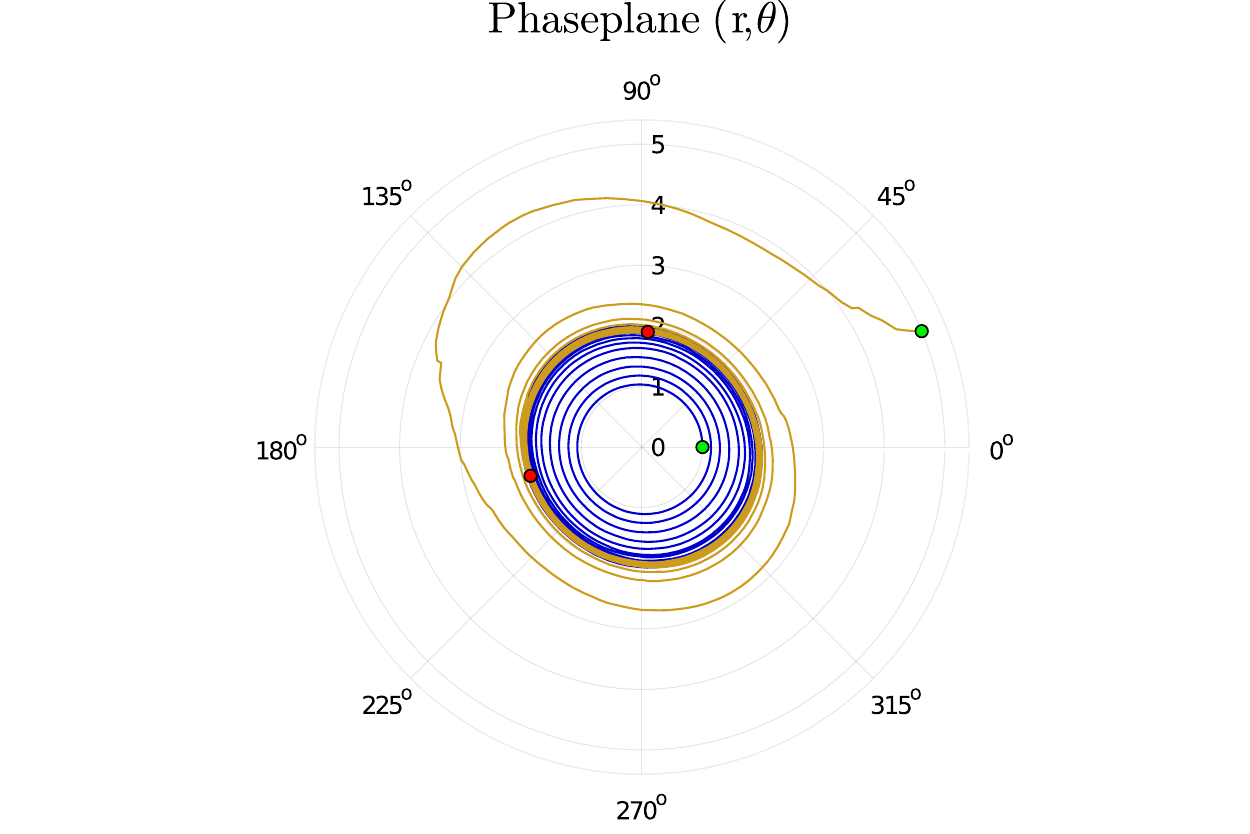}
     \includegraphics[width=0.44\textwidth]{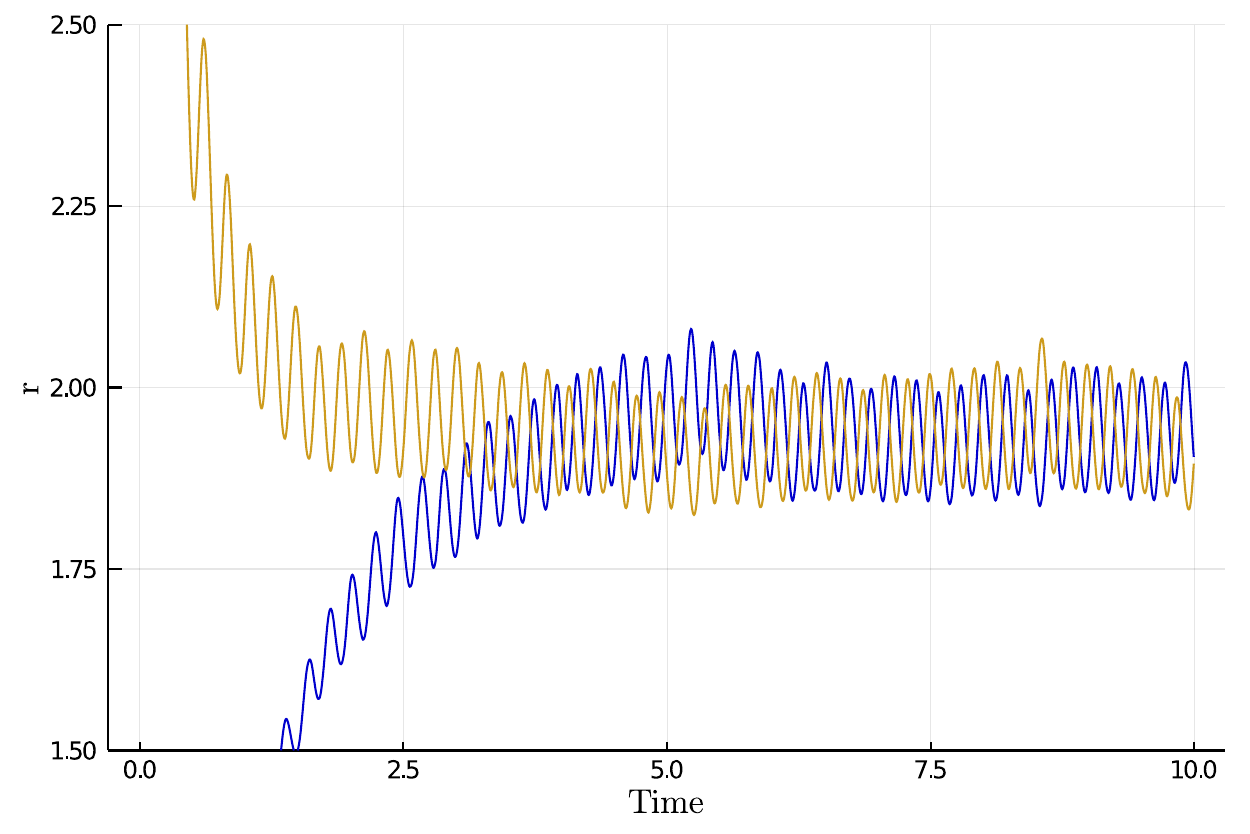}
    \caption{Stochastic dynamics with $\gamma=7/4$, $w=3$, $v=2$ and $\sigma=1$. In the phase plot (left plot), the initial conditions are marked with green dots and the terminal condition with red dots. The right panel plots the radius $r$ versus time for the same two initial conditions.}
    \label{fig:stochastic}
\end{figure}

For large $r$, the noise only becomes relevant when $r^\gamma (\phi-\pi/2)^2$ is order one. 
Depending on the parameters, this region might be completely contained in the interval where the $r$ dynamics are explosive or vice versa. 
In the first case, ($\gamma < \frac{v - w}2)$, we must rely solely on diffusion to cause the dynamics to leave the  critical region where $r$ is exploding. 
When the converse is true, the deterministic $\phi$ dynamics drives the system part of the way into the explosive region until  $r^\gamma (\phi-\pi/2)^2$ is order one and the diffusive dynamics begins to dominate the $\phi$ dynamics. 

Since the $r$ dynamics in the explosive region blows up in finite time, stability of the system turns on whether or not the diffusion (possibly mixed with the deterministic dynamics) can bring $\phi$ through this window before the $r$ dynamics blows up. The tools developed in  \cite{Herzog11,Kolba11,HerzogMattingly1,HerzogMattingly2} give a way to rigorously analyze such a scenario in the context of the one point motion\footnote{The one-point motion being the dynamics of a single trajectory generated by the stochastic flow starting from a single point. This should be contrasted with the two-point motion, namely the  dynamics of two different trajectories subject to the same noise, or the entire flow of stochastic diffeomorphisms, which describe the evolution of the entire phase space at once.}.  Here we are less interested in getting a sharp condition on when the one-point motion is stable but rather prove that for some parameter range  there exists a random attractor. While some of the approximation ideas carry over, we are required to develop estimates which control the trajectories of entire closed sets under the stochastic flow.

As already mentioned, it was shown in \cite{LeimbachScheutzow14} that the planar systems analyzed in  \cite{Herzog11,Kolba11,HerzogMattingly1,HerzogMattingly2} do not possess a random attractor despite having stable one-point motions. The difference, between our current system and those, lies in the properties of the explosive trajectory. In \cite{Herzog11,Kolba11,HerzogMattingly1,HerzogMattingly2} the explosive trajectories are unstable on both sides in the angular variable; hence a small perturbation  of positive or negative $\phi$ would cause the system to quickly leave the unstable trajectory in the angular direction of the perturbation. Thus, points which are perturbed to one side on the unstable trajectory head in a microscopically different direction than those perturbed on to the other side. Since the stochastic flow map is continuous in the initial conditions for short times, there is always a point which rests on the unstable trajectory; and hence, escapes to infinity in finite time. This implies that those SDEs do not generate a flow of stochastic diffeomorphisms which are defined on the entire plane and that some points of the plane are always mapped to infinity in finite time for every realization. 

In contrast, the current system has blow-up trajectories which are attracting from one-side and repelling from the other side (in $\phi$). Hence it is reasonable to expect that there is no one trajectory trapped on top of the exploding trajectory. This note proves that  the stochastic flow of diffeomorphisms does not develop points where the solution is not defined and that all compact sets are attracted to a common random absorbing set, namely a random attractor. In light of this discussion, we expect other systems which have similar stochastic  stabilization mechanisms to also possess random attractors. Examples include the systems studied in \cite{Doering12,Kolba17}.

\section{Blow-up in the deterministic case} 
We now return briefly to the deterministic dynamics to prove the blow-up suggested by Figure~\ref{fig:det}.
Consider the deterministic equation on $[0,\infty)\times [0,\pi]$
\begin{equation}
\label{eq:det_driving}
\begin{split}
\dot{r}_t &=-r_t^w\cos^2(\phi_t)+r^v_t\text{,}\\
\dot{\phi}_t &= -r_t^\gamma\cos^2(\phi_t) \text{,}
\end{split}
\end{equation}
where the equation for $\phi$ should be interpreted modulo $\pi$.
For initial conditions of the form $r_0>0$, $\phi_0=\pi/2$ there is {\em blow-up} or {\em explosion (in finite time)} if and only if $v>1$. It is natural to ask
whether the solution for {\em every} initial condition of the form $r_0>0$, $\phi_0\in [0,\pi]$ blows up. We denote the corresponding blow-up time by
$\mathfrak{e}(r_0,\phi_0) \in (0,\infty]$ (meaning that there is no blow-up in case $\mathfrak{e}(r_0,\phi_0) =\infty$).

The following proposition provides a sufficient condition for explosion in finite time, i.e. $\mathfrak{e}(r_0,\phi_0) < \infty$ for all $r_0>0,\phi_0\in[0,\pi]$.
\begin{proposition}
\label{Proposition:ch3_explosion}
If the parameters satisfy $2\gamma > w-v>0$, $v>1$, then  solutions to equation $(\ref{eq:det_driving})$ blow up for all initial conditions of the form $r_0>0$, $\phi_0\in [0,\pi]$. 
\end{proposition}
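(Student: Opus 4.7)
The plan is to reparametrize the dynamics by the angular variable $\psi := \phi - \pi/2$ and to reduce the claim to the convergence of a single blow-up-time integral. First I would dispose of the degenerate angles. If $\phi_0 = \pi/2$, then $\phi_t \equiv \pi/2$ and $\dot r_t = r_t^v$, which blows up at time $r_0^{1-v}/(v-1)$ since $v > 1$. For any other $\phi_0 \in [0,\pi]$, the inequality $\dot\phi \le 0$ together with the $\pi$-periodicity of the system in $\phi$ drives $\phi_t$ into $(\pi/2,\pi]$ in finite time, using the uniform lower bound $r_t \ge r_{\min} := \min(r_0, 1) > 0$ (valid because $\dot r = \sin^2\phi \ge 0$ at $r = 1$ and $\dot r \ge r^v - r^w > 0$ for $r < 1$, by $w > v$). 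So I may reduce to $\phi_0 \in (\pi/2, \pi]$ and set $\psi_t := \phi_t - \pi/2 \in (0, \pi/2]$, which is then strictly decreasing in $t$.

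Since $\psi_t$ is strictly monotone, I change the independent variable from $t$ to $\psi$, viewing $r$ as a function on $(\psi_\ast, \psi_0]$ (with $\psi_\ast := \inf_t \psi_t \ge 0$) satisfying
\begin{equation*}
\frac{dr}{d\psi} \;=\; r^{w-\gamma} - \frac{r^{v-\gamma}}{\sin^2\psi}.
\end{equation*}
A short comparison argument shows $r(\psi) \to \infty$ as $\psi \searrow \psi_\ast$: if $r$ were bounded on $(\psi_\ast, \psi_0]$, the singular factor $1/\sin^2\psi$ combined with the two-sided control of $r$ would force $\int_\psi^{\psi_0} dr/d\psi' \, d\psi' \to -\infty$ as $\psi \searrow \psi_\ast$, and hence $r(\psi) \to +\infty$, a contradiction. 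The proposition therefore reduces to proving that the elapsed time
\begin{equation*}
T \;=\; \int_{\psi_\ast}^{\psi_0} \frac{d\psi}{r(\psi)^{\gamma}\sin^2\psi}
\end{equation*}
is finite.

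The main obstacle is to obtain a quantitative lower bound on $r(\psi)$ as $\psi \searrow \psi_\ast$ strong enough to make the integrand in $T$ integrable. Geometrically, the critical curve $\Gamma := \{r^{w-v}\sin^2\psi = 1\}$ separates the explosive region ($\dot r > 0$) from the non-explosive one ($\dot r < 0$) and attracts the planar flow. Monitoring $Q_t := r_t^{w-v}\sin^2\psi_t$, a direct calculation gives
\begin{equation*}
\dot Q \;=\; (w-v)\, r^{w-1}\sin^2\psi\,(1-Q) - 2\, r^{\gamma} Q \sin\psi\cos\psi,
\end{equation*}
from which $\dot Q < 0$ at $Q = 1$, making $\{Q \le 1\}$ forward-invariant and reached in finite time from the non-explosive side. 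In the leading regime the trajectory then tracks $\Gamma$, yielding $r(\psi) \asymp \sin^{-2/(w-v)}\psi$ near $\psi_\ast$; substituting this into $T$ produces $\int \sin^{2\gamma/(w-v)-2}\psi\,d\psi$, which converges near $0$ precisely under the assumption $2\gamma > w - v$. Rigorously confirming this scaling, using in particular the conservation-type identity $\int_0^t r_s^{\gamma}\sin^2\psi_s\,ds = \psi_0 - \psi_t \le \pi/2$ (from integrating $\dot\psi$) to quantify how long the trajectory can linger far from $\Gamma$, is the technical heart of the proof.
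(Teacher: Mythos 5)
The proposal takes a genuinely different route from the paper's and, as you yourself indicate in the final sentence, it is incomplete at exactly the point where the work has to be done. The paper's proof is elementary and self-contained: it constructs a nested family of forward-invariant rectangles $B_k=[2^k,\infty)\times[\pi/2,\Phi_k]$, where $\Phi_k$ is the angle at which the radial drift changes sign on the level $r=2^k$, and bounds the transit time from $B_k$ to $B_{k+1}$ by a sum $s_k+t_k$ of an angular-crossing time and a radial-growth time, which it then shows is summable (using $2\gamma>w-v$ for $\sum s_k$ and $v>1$ for $\sum t_k$). No asymptotic matching along the critical curve is needed.

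Your reduction to $\phi_0\in(\pi/2,\pi]$, the formula $dr/d\psi=r^{w-\gamma}-r^{v-\gamma}/\sin^2\psi$, the argument that $r(\psi)\to\infty$ as $\psi\searrow\psi_\ast$, the $\dot Q$ computation, and the forward invariance of $\{Q\le 1\}$ are all correct. But the step that actually finishes the proof — a quantitative \emph{lower} bound on $r(\psi)$ near $\psi=0$ strong enough to make $\int d\psi/(r^\gamma\sin^2\psi)$ converge — is only asserted, not proven. The invariance of $\{Q\le 1\}$ goes the wrong way for this purpose: it gives the \emph{upper} bound $r\le\sin^{-2/(w-v)}\psi$, hence a lower bound on the integrand, which does not control $T$. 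Turning $\asymp$ into a two-sided bound would require showing that $Q$ stays bounded away from $0$, and this is not established. In fact it is not true in general under the proposition's hypotheses: equating the two terms in your $\dot Q$ formula, the quasi-equilibrium value $Q_\ast$ of $Q$ satisfies $(1-Q_\ast)/Q_\ast\sim r^{\gamma-w+1}\cot\psi$, which on the curve $r\sim\sin^{-2/(w-v)}\psi$ stays close to $1$ only when $\gamma<(w+v)/2-1$. The proposition imposes no upper bound on $\gamma$ (and already for the paper's own example $\gamma=7/4$, $v=2$, $w=3$ one has $\gamma>(w+v)/2-1=3/2$), so the trajectory need not track $\Gamma$ and the claimed scaling $r(\psi)\asymp\sin^{-2/(w-v)}\psi$ fails. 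The "conservation-type identity" you mention bounds the total angular displacement but gives no pointwise lower bound on $r^\gamma\sin^2\psi$, so it does not repair the argument by itself. Some genuinely new estimate is needed to close the gap — or one may simply adopt the paper's discrete nested-box scheme, which sidesteps this issue entirely.
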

\begin{proof}
  Note that \eqref{eq:det_driving} has a unique solution for every initial condition $r_0>0$. 
  For $k \in \mathbb{N}_0$, let $R_k:=2^k$ and let $\Phi_k$ be the unique number in the interval $\big(\frac \pi 2,\pi\big]$ such that
  $R_k^{w-v}\cos^2\big(\Phi_k\big)=1/2$ and
  define $B_k:=\big[R_k,\infty\big)\times \big[\pi/2,\Phi_k\big]$. It is straightforward to check that each set $B_k$ is invariant, i.e.~$(r_T,\phi_T) \in B_k$
  implies $(r_t,\phi_t) \in B_k$ for all $t\geq T$ up to the explosion time $\mathfrak{e}(r_0,\phi_0)$ and that every solution with initial condition $r_0>0$
  hits $B_0$ after a finite time.

  Next, we estimate the time to reach the set $B_{k+1}$ from an arbitrary point in $B_k$ from above by some number $u_k$.
  If $\sum_k u_k <\infty$, then we have blow-up from any starting point with $r_0>0$.
  We can choose $u_k=s_k+t_k$, where $s_k$ is an upper bound for the time it takes until $\phi_t \le \Phi_{k+1}$ when $(r_0,\phi_0) \in B_k$   and 
  $t_k$ is an upper bound for the time it takes until $r_t \ge R_{k+1}$ when $(r_0,\phi_0) \in [R_k,R_{k+1}]\times [0,\Phi_{k+1}]$.

  On $[R_k,\infty)\times [\Phi_{k+1},\Phi_k]$, an upper bound for the derivative of $\phi$ is given by
  $w_k:=-R_k^\gamma\cdot \frac 12 \cdot R_{k+1}^{v-w}=-2^{k(\gamma+v-w)} \cdot 2^{v-w-1}$ whence, as $k \to \infty$,
  $$
     s_k=\frac{\Phi_{k+1}-\Phi_k}{w_k}\le \frac{\Phi_k-\frac \pi 2}{|w_k|}\sim \frac{\sqrt{\cos^2(\Phi_k)}}{|w_k|}= 2^{\frac k2 (-2\gamma -v+w)} \cdot 2^{w-v+\frac 12},
  $$
  which is summable since $2\gamma >w-v$.

  On $\big[R_k,R_{k+1}]\times [0,\Phi_{k+1}]$, a lower bound for the derivative of $r$ is given by $v_k:=R_k^v-R_{k+1}^w\cos^2(\Phi_{k+1})=\frac 12 R_k^v$
  whence
  $$
  t_k=(R_{k+1}-R_k)/v_k\le 2 \cdot 2^{k(1-v)},
  $$
  which is summable since $v>1$, so the proof of the proposition is complete.
\end{proof}

\section{Random dynamical systems and random attractors}
\label{subsec:stoch_flow}

We recall the concepts of a random dynamical system (RDS) and a random attractor. 
We  restrict ourselves to the case in which the state space is $\R^d$ and time is continuous.

\begin{definition}
Let $(\Omega,\FF,\P)$ be a probability space and assume that 
$\vartheta:\R \times \Omega\to \Omega$ is measurable, preserves $\P$ and satisfies 
$\vartheta_0=\mathrm{id}$ and $\vartheta_{t+s}=\vartheta_t\circ \vartheta_s$, $s,t \in \R$.

Assume further that $\varphi:[0,\infty)\times \R^d\times \Omega \to \R^d$ is measurable and, 
for all $\omega \in \Omega$, $s,t\ge 0$,
\begin{itemize}
    \item [(i)] $\varphi(t,\omega):\R^d \to \R^d$ is continuous, 
    \item[(ii)] $\varphi(t+s,\omega)=\varphi(t,\vartheta_s\omega)\circ \varphi(s,\omega)$.
\end{itemize}
Then $\varphi$ (or $(\varphi,\vartheta)$) is called a {\em random dynamical system} (RDS).
\end{definition}

A typical example of an RDS is the solution of an SDE: if the SDE has Lipschitz coefficients and is driven by a Wiener process (or, more generally, a continuous semimartingale  with stationary 
increments), then, on a canonical space, there exists a modification of the solution 
which is a random dynamical system, see \cite{AS95}.

The concept of a random attractor of an RDS was introduced in \cite{CF94}. Later, weak random attractors were introduced by Ochs \cite{ochs99}. More recent basic papers about 
random attractors are \cite{CK15} and \cite{CS18}. In our set-up, the concept of a {\em weak} random attractor (which attracts bounded sets in probability rather than almost surely) seems appropriate. 
\begin{definition}
Let $(\varphi,\vartheta)$ be an RDS on $(\Omega,\FF,\P)$. A random set $A(\omega)$, $\omega \in \Omega$ is called a {\em (weak, random) attractor}, if
\begin{itemize}
    \item [(i)] $A$ is a compact random set.
    \item [(ii)] $A$ is strictly invariant, i.e.
    $$
    \varphi(t,\omega)A(\omega)=A(\vartheta_t\omega)
    $$
    $\P$-almost surely for every $t \ge 0$.
    \item [(iii)] $A$ attracts all bounded sets:
    $$\lim_{t \to \infty} \sup_{x \in \varphi(t,\vartheta_{-t}\omega)B}\inf_{y \in  A(\omega)}|x-y| =0\, \mbox{ in probability}$$ for every bounded set $B \subset \R^d$.
\end{itemize}

\end{definition}
The following necessary and sufficient criterion for the existence of a weak attractor is a special case of \cite[Theorem 4.2]{CDS09}.
\begin{proposition}\label{th:criterion}
The  $\R^d$-valued RDS $(\varphi,\vartheta)$ admits a weak random attractor if and only if for 
every $\varepsilon>0$ there exists some $\overline{R}>0$ such that for all $R>0$ there exists a $t_0>0$ such that for all $t \ge t_0$
$$
\P\Big(\sup_{|z|\le R}|\varphi(t,z)|  \le \overline{R} \Big)\ge 1-\varepsilon.
$$
\end{proposition}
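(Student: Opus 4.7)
The plan is to handle the two directions of the equivalence separately; the necessity is a short argument, while the sufficiency is where all the work lies. For necessity (attractor implies criterion), I would use compactness of $A(\omega)$ to pick, given $\varepsilon>0$, a deterministic $\bar R$ with $\P(A(\omega)\subset \bar B_{\bar R-1})\ge 1-\varepsilon/2$. For any fixed $R>0$, the pullback attraction~(iii) supplies $t_0$ such that for $t\ge t_0$ the image $\varphi(t,\vartheta_{-t}\omega)\bar B_R$ lies within Hausdorff distance $1$ of $A(\omega)$ with probability at least $1-\varepsilon/2$; on the intersection of these events it is contained in $\bar B_{\bar R}$. Since $\vartheta_{-t}$ preserves $\P$, the laws of $\varphi(t,\vartheta_{-t}\omega)\bar B_R$ and $\varphi(t,\omega)\bar B_R$ coincide, giving the forward statement of the criterion.

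For sufficiency, the natural candidate is the random pullback omega-limit set of closed balls,
\[
A(\omega):=\bigcap_{R>0}\Omega_R(\omega),\qquad \Omega_R(\omega):=\bigcap_{T\ge 0}\overline{\bigcup_{t\ge T}\varphi(t,\vartheta_{-t}\omega)\bar B_R}.
\]
Using $\P$-invariance of $\vartheta$ in reverse, the criterion yields its pullback counterpart: $\P(\varphi(t,\vartheta_{-t}\omega)\bar B_R\subset \bar B_{\bar R_n})\ge 1-\varepsilon_n$ for $t$ sufficiently large, with $\varepsilon_n\downarrow 0$ and $\bar R_n$ from the hypothesis. Choosing deterministic times $t_k\to\infty$ with summable exceptional probabilities and applying Borel--Cantelli upgrades this to $\P(\Omega_R(\omega)\subset \bar B_{\bar R_n})\ge 1-\varepsilon_n$; intersecting over $n$ shows $A(\omega)$ is a.s.\ bounded and, being closed by construction, compact. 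Measurability is immediate from the countable operations and continuity of $\varphi(t,\cdot)$. Attraction in probability of an arbitrary bounded set reduces directly to the criterion by comparing the pushforward $\varphi(t,\vartheta_{-t}\omega)\bar B_R$ with $\bar B_{\bar R}\supset A(\omega)$ on the good event.

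The main obstacle is verifying strict invariance, $\varphi(t,\omega)A(\omega)=A(\vartheta_t\omega)$ $\P$-a.s. The inclusion $\subset$ follows from the cocycle identity $\varphi(t,\omega)\varphi(s,\vartheta_{-s}\omega)=\varphi(t+s,\vartheta_{-(t+s)}\vartheta_t\omega)$, which sends accumulation points of the pullback at $\omega$ to accumulation points at $\vartheta_t\omega$. The reverse inclusion is the delicate part: each $y'\in A(\vartheta_t\omega)$ must be written as $\varphi(t,\omega)y$ for some $y\in A(\omega)$, which requires a compactness argument inside $\bar B_{\bar R_n}$ followed by continuity of $\varphi(t,\omega)$ to pass to the limit in the corresponding preimage sequence. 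Because the criterion delivers absorption only in probability, this extraction has to be done along random subsequences on the relevant favorable event, which is precisely the technical content of \cite[Theorem 4.2]{CDS09} being cited here.
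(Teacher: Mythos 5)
The paper does not prove Proposition~\ref{th:criterion} at all: it is stated as ``a special case of \cite[Theorem 4.2]{CDS09}'' and no argument is given, so there is nothing in the paper to compare your proposal against line by line. Your necessity direction is correct and is essentially the standard one: compactness of $A(\omega)$ gives a deterministic $\overline R$ with $\P(A(\omega)\subset \bar B_{\overline R -1})\ge 1-\varepsilon/2$, pullback attraction in probability gives the closeness of $\varphi(t,\vartheta_{-t}\omega)\bar B_R$ to $A(\omega)$, and $\P$-invariance of $\vartheta_{-t}$ transfers the pullback bound to the forward one.

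The sufficiency direction, however, has two genuine gaps. First, the candidate attractor is written as $A(\omega)=\bigcap_{R>0}\Omega_R(\omega)$, but $\Omega_R(\omega)$ is increasing in $R$ (larger balls have at least as many pullback limit points), so the intersection over $R$ produces the smallest of these sets, which in general attracts only tiny balls and cannot attract an arbitrary bounded $B$. The right candidate is $\overline{\bigcup_{n}\Omega_n(\omega)}$, or equivalently $\Omega_R$ for a single $R$ together with a separate argument that all $\Omega_R$ coincide under the hypothesis. Second, and more seriously, the Borel--Cantelli step used to establish compactness of $\Omega_R(\omega)$ does not go through. The criterion yields, for a fixed radius $\overline R=\overline R(\varepsilon)$, only the bound $\P\big(\varphi(t,\vartheta_{-t}\omega)\bar B_R\not\subset\bar B_{\overline R}\big)\le\varepsilon$ for all $t\ge t_0$; this exceptional probability is constant in $t$ and hence not summable along any sequence $t_k\to\infty$. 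To make it summable you must let $\overline R$ grow with $k$, and then Borel--Cantelli only tells you the image eventually lies inside a \emph{growing} family of balls, which says nothing about boundedness of the omega-limit set. This is not a cosmetic issue: when absorption holds only in probability, the almost-sure pullback omega-limit set of $\bar B_R$ can genuinely fail to be compact, so the ``construct $\Omega_R$ and check it is the attractor'' route needs to be replaced by a finer argument. That replacement is exactly the content of \cite[Theorem 4.2]{CDS09}, which you invoke only for invariance; it is also needed for compactness. Your paragraph on strict invariance correctly identifies the remaining delicate point but, as you note, essentially defers it to the cited theorem.
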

Our aim is to show that this criterion holds for the RDS generated by \eqref{eq:driving}. 

At first, it is not clear that the equation generates an RDS on $\R^2$ for two reasons: the coefficients are not necessarily locally Lipschitz continuous in a neighborhood of 0 and 
there might be blow-up.  To deal with the first issue, we can modify the  
coefficients within the unit circle in such a way that they become Lipschitz (in Cartesian coordinates). 
This will not change the existence of an attractor. The second issue is more serious. 
Since the coefficients are locally Lipschitz continuous, equation (\ref{eq:driving}) on $\mathbb{R}^2$ generates at least a {\em local} RDS for which some trajectories might blow-up in finite time (there is no need to provide a formal definition here). In fact, we will prove the 
condition in Proposition \ref{th:criterion} for the {\em local} RDS generated by the system.  
This automatically shows that the local RDS is in fact a (global) RDS.

%

\section{Main result in the stochastic setting}
The main result of this paper states the existence of a random attractor for a certain parameter regime.
\begin{theorem}
\label{th:main}
For any $\sigma>0$ and parameter choice $w>v>1$, $\frac{2}{3}\gamma+1>v$, $w-1>\gamma$, equation  \eqref{eq:driving} generates an RDS $\varphi$. Further, $\varphi$ admits a random attractor. 
\end{theorem}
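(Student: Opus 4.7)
The strategy is to verify the criterion of Proposition~\ref{th:criterion} for the local RDS generated by \eqref{eq:driving}; since that criterion produces a uniform modulus of boundedness $\overline R$ valid for every initial radius $R$, its verification will also rule out blow-up in finite time and upgrade the local RDS to a genuine RDS on $\mathbb R^2$. After smoothing the coefficients inside the unit disk so that they are globally Lipschitz in Cartesian coordinates, the entire task reduces to controlling the radial component $r_t$ uniformly over the ball $\{|z|\le R\}$ of initial conditions.

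First I would handle the one-point motion. For $r$ large, decompose the state space into a \emph{dangerous corridor} $D_r=\{\phi:\cos^2\phi\le 2r^{v-w}\}$ of angular width $\asymp r^{(v-w)/2}$ near $\pi/2$, in which $\dot r$ can be positive, and its complement the \emph{safe zone}, where $\dot r\le -\tfrac12 r^w\cos^2\phi$ drives $r$ down super-polynomially fast. Inside $D_r$ the deterministic angular drift $-r^\gamma\cos^2\phi$ together with the additive noise $\sigma\,\mathrm dW_t$ traverse the corridor in a time whose typical size, under the standing assumptions $w-1>\gamma$ and $\tfrac23\gamma+1>v$, is strictly shorter than the ODE blow-up time $\asymp r^{1-v}/(v-1)$ implied by the bound $\dot r\le r^v$. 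Thus across a single passage through $D_r$, $r$ is multiplied by a factor that is tight in probability, after which exit into the safe zone is followed by extremely rapid decay back to some fixed threshold~$R_*$.

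Second, I would assemble the embedded semi-Markov chain announced in the introduction. Let $\tau_0<\tau_1<\cdots$ be the successive instants at which $\phi_t$ completes one revolution mod $\pi$. The previous step shows that the sequence $(r_{\tau_k})$ is stochastically dominated by a Markov chain on $[R_*,\infty)$ which, via a Foster--Lyapunov test function such as $V(r)=\log r$ or $r^\beta$ for small $\beta>0$, exhibits geometric drift toward $[R_*,R_*']$ for some $R_*'>R_*$. This delivers an invariant probability measure for the dominator, and combined with an exponential tail bound on the inter-arrival times $\tau_{k+1}-\tau_k$ it yields tightness of the one-point distribution of $r_t$ uniformly in $t$.

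The principal obstacle is then upgrading tightness of the one-point motion to uniform boundedness of $\sup_{|z|\le R}|\varphi(t,z)|$ in probability. Here the geometric feature emphasized in Section~2 is essential: the blow-up line $\{\phi=\pi/2\}$ is attracting from above and repelling from below, so the ball $\{|z|\le R\}$ does not straddle the unstable manifold. Concretely, all trajectories issued from $\{|z|\le R\}$ and driven by the same Brownian path are swept through $\phi=\pi/2$ by the combined action of the non-positive deterministic drift and the common noise, and one must show that they enter the safe zone essentially \emph{simultaneously} before the radial amplification of the worst trajectory in the ball has time to accumulate. I would quantify this by a synchronization/coupling estimate comparing the $\phi$-components of two flow lines with the same noise, paired with the crude deterministic bound $\dot r\le r^v$ applied on the brief synchronization window. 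Once the image $\varphi(t,\{|z|\le R\})$ lies in the safe zone with a uniform upper bound on $r$, the semi-Markov domination of the previous paragraph applies to the whole image at once and closes the argument. The synchronization estimate is the main technical hurdle, and it is where the full strength of the three parameter inequalities $w>v>1$, $\tfrac23\gamma+1>v$, $w-1>\gamma$ is presumably needed.
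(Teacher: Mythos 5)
Your broad strategy matches the paper's — verify the criterion of Proposition~\ref{th:criterion} for the local RDS, and dominate the radial component by an embedded semi-Markov chain whose invariant law gives tightness — but the decisive uniformity step is handled in a genuinely different way, and your version of it is precisely where the argument has a gap. You propose to first establish tightness of the one-point radius and then \emph{upgrade} it via a ``synchronization/coupling estimate'' comparing the $\phi$-components of two flow lines driven by the same noise, which you yourself flag as the main technical hurdle. The difficulty is that the angular drift is $-r_t^\gamma\cos^2\phi_t$, so two trajectories with the same Brownian path feel drifts that differ through their (different) radial components; it is not at all clear that they synchronize rather than drift apart, and no mechanism is offered. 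The paper avoids this entirely by never separating one-point and uniform control: it introduces the frozen-radius auxiliary diffusion $\tilde\phi^R$, proves by a one-sided Gronwall comparison (Lemma~\ref{furtherlemma}) that $\phi_t(z)\le\tilde\phi_t(\psi)$ simultaneously for every $z=(r_0,\psi)$ with $r_0>R$ as long as $r_t(z)>R$, and then exploits that the one-dimensional diffusion $\tilde\phi$ is monotone in its starting angle, so $\sup_{\psi\in[a,\pi]}\tilde\nu_a(\psi)=\tilde\nu_a(\pi)$. This delivers the uniform crossing-time estimate of Lemma~\ref{lemma:timebound} over the entire circle $I(k)$ in one stroke, with no synchronization argument needed.

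The embedded chain is also built differently. You index by full revolutions of $\phi$ modulo $\pi$ and invoke a Foster--Lyapunov function such as $\log r$ or $r^\beta$. The paper's dominating process $K_t$ instead jumps at level crossings of $\sup_{z\in I(k)}r_t(z)$ across the dyadic scales $R_k=2^k$ with a timeout $\theta_k$, and positive recurrence of the resulting birth--death-type embedded chain is read off directly from the explicit bound $p_i\lesssim 2^{-i(\frac23\gamma-v+1)}$ of Corollary~\ref{co:vanishing_probability} together with the elementary series criterion, rather than from a drift condition. Both routes are plausible, but the paper's construction keeps the estimate uniform over the image of $I(k)$ throughout — which is exactly what Proposition~\ref{th:criterion} demands — whereas yours would still need the unproven synchronization step to pass from a single dominated trajectory to a bound on $\sup_{|z|\le R}|\varphi(t,z)|$. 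Until that step is made precise, the proposal is an outline rather than a proof.
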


\begin{remark}
  The assumptions on the parameters $\gamma,v$ and $w$ in Theorem \ref{th:main} and in Proposition \ref{Proposition:ch3_explosion} are not comparable, i.e.~neither
  set of assumptions implies the other. There are, however, parameter values satisfying both sets of assumptions, for example $\gamma=\frac 74,v=2,w=3$.
  \end{remark}

The remainder of the paper is devoted to the proof of Theorem \ref{th:main}. We will fix the 
parameters $\gamma,v,w,\sigma$ as in the theorem. All future constants are allowed to depend on these parameters.

We split the proof into  several parts, each treating a different aspect of the problem. The first provides a quantitative estimate for the expected time to cross a neighborhood around the angle  $\pi/2$. 
There the drift in the radial component leads to explosion in the deterministic case. In the second part, we show that the drift inwards, away from this ``dangerous'' neighborhood around $\pi/2$, compensates the growth with high probability. Finally, we conclude with a Markov-like argument to extend the local existence to all times.

\section{Crossing the critical region}
\label{sec:cross_crit}
We provide an estimate on the time it takes to cross the critical region defined as $\{ (r,\phi) \colon r^w\cos^2(\phi)\leq r^{v}  \}$.
First, we introduce the one-dimensional auxiliary angular process $\tilde{\phi}^R(\psi)=\tilde{\phi}$ on $\R$ (not considered modulo $\pi$) by freezing
the radial component, i.e.
\begin{equation*}
\begin{split}
\mathrm{d}\tilde{\phi}_t &=-R^\gamma\cos^2(\tilde{\phi}_t)\mathrm{d}t+\sigma\mathrm{d}W_t \text{,}\\
\tilde{\phi}_0 &= \psi
\text{,}
\end{split}
\end{equation*}
where $R\geq 1$ is fixed and $\psi\in[0,\pi]$. Before we elucidate the relation to the angular process $\phi$, we estimate the expected time it takes to cross $\pi/2$ for $\tilde{\phi}$. Let $0\leq a< \pi/2<b <\infty$ and define
\begin{equation*}
\begin{split}
\tilde{\nu}_{a,b}(\psi)&:=\tilde{\nu}_{a,b}:=\inf\lbrace t>0\colon \tilde{\phi_t}(\psi) \notin (a,b)\rbrace,\\
\tilde{\nu}_a(\psi)    &:=\tilde{\nu}_a    :=  \lim_{b\rightarrow\infty}\tilde{\nu}_{a,b}=\inf\lbrace t>0\colon \tilde{\phi_t}(\psi)=a\rbrace
\text{,}
\end{split}
\end{equation*}
for $\psi \in [0,\pi]$. Then, for $\phi \in (a,b)$,  $u_{a,b}(\phi):=\mathbb{E}\tilde{\nu}_{a,b}(\phi)$ is given by 
\begin{equation*}
u_{a,b}(\phi)=\frac{g(b)}{f(b)} f(\phi) - g(\phi),
\end{equation*}
see  \cite[p.343]{KS91}, where
\begin{align*}
    f(\phi)&:= \int_a^\phi e^{KA(\beta)}\mathrm{d}\beta\text{,}\quad\text{and}\quad
g(\phi):= \frac{2}{\sigma^2}\int_a^\phi\int_a^\beta e^{K(A(\beta)-A(z))}\mathrm{d}z\mathrm{d}\beta
\text{,}
\end{align*}
with
\begin{equation*}
A(\phi):=\int_0^\phi \cos^2(\beta)\mathrm{d}\beta=\frac{1}{2}\phi+\frac{1}{4}\sin(2\phi)\quad\text{and}\quad
K:=\frac{2}{\sigma^2}R^\gamma\text{.}\\
\end{equation*}
Observe that the following asymptotic formula holds
\begin{align*}
\lim_{b \to \infty}u_{a,b}(\phi)
=\int_a^\phi\int_\beta^\infty e^{-K(A(z)-A(\beta))}\, \mathrm{d}z\mathrm{d}\beta=: u_{a}(\phi).
\end{align*}
The  Monotone Convergence  Theorem then implies
\[
u_{a}=\lim_{b\to\infty}u_{a,b}=\lim_{b\to\infty}\mathbb{E}\tilde{\nu}_{a,b}=\mathbb{E}\lim_{b\to\infty}\tilde{\nu}_{a,b}= \mathbb{E}\tilde{\nu}_a 
\text{.}
\]
Again due to monotonicity we have
\begin{equation}\label{monotonicity}
\mathbb{E}\sup_{\phi\in[a,\pi]}\tilde{\nu}_a(\phi)=\mathbb{E}\tilde{\nu}_a(\pi)=u_a(\pi)=\frac{2}{\sigma^2}\int_a^{\pi}\int_\beta^\infty e^{-K(A(z)-A(\beta))}\mathrm{d}z\mathrm{d}\beta
\text{.}
\end{equation}

It will turn out to be useful to know how quickly $u_a(\pi)$ converges to zero in $R$ or, equivalently, in $K$. The following lemma provides a bound on the speed of convergence. Its proof can be found in the appendix.
\begin{lemma}
\label{th:double_integral}
For each $k_0>0$, there exists a constant $C$, such that for all $K\ge k_0$
\[
\int_0^\pi\int_\beta^\infty e^{-K(A(z)-A(\beta))}\mathrm{d}z\mathrm{d}\beta \leq C K^{-\frac{2}{3}}.
\] 
\end{lemma}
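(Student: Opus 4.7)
Since $A'(\pi/2)=\cos^2(\pi/2)=0$, the function $A$ has a cubic degeneracy at the critical point $\pi/2$: one has $A(\pi/2+x)-A(\pi/2)\sim x^3/3$ as $x\to 0$. This is precisely what produces the exponent $-2/3$ rather than the $-1$ that a naive Laplace-type argument would yield. The plan is to shift the critical point to the origin, obtain a uniform upper bound on the inner integral in terms of the outer variable, and then integrate.

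\textbf{Setup.} Let $F(x):=A(x+\pi/2)-A(\pi/2)=\tfrac{x}{2}-\tfrac{1}{4}\sin(2x)$. Then $F$ is odd and strictly increasing with $F'(x)=\sin^2(x)$. Since $\sin(t)/t$ is bounded below on $(0,3]$, there is a constant $c_0>0$ with $\sin^2(t)\ge c_0 t^2$ on $[0,3]$, and hence
\[
F(w)-F(y)\ \ge\ \tfrac{c_0}{3}(w^3-y^3)\qquad\text{for all }0\le y\le w\le 3.
\]
After substituting $\beta=\pi/2+y$ and $z=\pi/2+w$, the claim reduces to
\[
I\ :=\ \int_{-\pi/2}^{\pi/2}\int_{y}^{\infty} e^{-K(F(w)-F(y))}\,dw\,dy\ \le\ CK^{-2/3}.
\]

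\textbf{Tail truncation.} From $|F(x)-x/2|\le 1/4$ one gets $F(w)-F(y)\ge (w-\pi/2)/2-1/2\ge w/4$ for all $w\ge 3$ and all $y\in[-\pi/2,\pi/2]$, so $\int_3^\infty e^{-K(F(w)-F(y))}\,dw\le Ce^{-K/2}/K$, which is $o(K^{-2/3})$ uniformly in $K\ge k_0$. It therefore suffices to bound the inner integral restricted to $w\in[y,3]$.

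\textbf{Uniform inner bound.} For $y\in[0,\pi/2]$, set $u=w-y$ and use $w^3-y^3=3y^2u+3yu^2+u^3\ge 3y^2u+u^3$ to obtain
\[
\int_y^3 e^{-K(F(w)-F(y))}\,dw\ \le\ \int_0^\infty e^{-c_0 K(y^2u+u^3/3)}\,du\ \le\ C\min\!\bigl(K^{-1/3},\,(Ky^2)^{-1}\bigr),
\]
where the two terms in the minimum are obtained by retaining in the exponent only $u^3/3$ (giving an integral of order $K^{-1/3}$) or only $y^2u$ (giving $(c_0 Ky^2)^{-1}$). For $y=-y'\in[-\pi/2,0)$, split the inner integral at $w=0$; the substitution $w\mapsto -w$ and the oddness of $F$ convert the piece on $[-y',0]$ into $\int_0^{y'}e^{-K(F(y')-F(w'))}\,dw'$, which satisfies the same $\min$-bound via $y'^3-w'^3\ge y'^2(y'-w')$ and $y'^3-w'^3\ge (y'-w')^3$, while the remaining piece factors as $e^{-KF(y')}\int_0^3 e^{-KF(w)}\,dw\le CK^{-1/3}e^{-c_0 Ky'^3/3}$, and this too is $\le C'\min(K^{-1/3},(Ky'^2)^{-1})$ because $t^{2/3}e^{-t}$ is bounded for $t\ge 0$.

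\textbf{Outer integration and main obstacle.} It then suffices to bound
\[
\int_0^{\pi/2}\min\!\bigl(K^{-1/3},(Ky^2)^{-1}\bigr)\,dy\ \le\ \int_0^{K^{-1/3}}K^{-1/3}\,dy+\int_{K^{-1/3}}^{\infty}(Ky^2)^{-1}\,dy\ =\ 2K^{-2/3},
\]
which, together with the symmetric bound for $y<0$, gives $I\le CK^{-2/3}$. The one delicate step is the treatment of $y<0$, where the bottleneck of the integrand is at $w=0$ rather than at the lower endpoint $w=y$: this forces the splitting at $w=0$ and the use of the oddness of $F$ to reduce back to the $y>0$ computation. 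Everything else is elementary bookkeeping built on the cubic lower bound for $F$.
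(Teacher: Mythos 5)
Your proof is correct and takes a genuinely different route from the paper. The paper converts the double integral into a Stieltjes integral $\int_0^\infty e^{-Kx}\,\mathrm{d}m(x)$, where $m$ is the pushforward of Lebesgue measure on $\Delta=\{(\beta,z):\beta\le z\le \beta+1,\,0\le\beta\le\pi\}$ under $(\beta,z)\mapsto A(z)-A(\beta)$, and argues via a layer-cake estimate $m([0,y])\le Cy^{2/3}$ that it tries to prove by enclosing the sublevel set in a region $M_y$. You instead recenter at $\pi/2$, obtain the cubic lower bound $F(w)-F(y)\ge\tfrac{c_0}{3}(w^3-y^3)$, bound the inner integral pointwise by $C\min(K^{-1/3},(Ky^2)^{-1})$, and integrate in $y$; this is a direct Laplace-method computation. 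Both routes rest on the cubic degeneracy of $A$ at $\pi/2$, but yours is more explicit and, notably, avoids a gap in the paper's sketch: the inclusion $\{(\beta,z)\in\Delta:A(z)-A(\beta)\le y\}\subseteq M_y$ asserted there to be ``straightforward'' is in fact false for small $y$. For example, at $\beta=\pi/2-y^{1/4}$ one has $\cos^2\beta\approx y^{1/2}$, so the sublevel-set slice reaches $z-\beta$ of order $y/\cos^2\beta\approx y^{1/2}$, which exceeds the strip width $\kappa y^{2/3}$, while $\pi/2-\beta=y^{1/4}$ lies outside the box region $|\beta-\pi/2|\le\kappa y^{1/3}$. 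The bound $m([0,y])\le Cy^{2/3}$, and hence the lemma, remain true; the enclosing region would need vertical width on the order of $\min(y/\cos^2\beta,\,y^{1/3})$, which is precisely what your $\min(K^{-1/3},(Ky^2)^{-1})$ bound encodes on the Laplace-transform side.

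One small slip, which does not affect the argument: in the tail truncation, the inequality $(w-\pi/2)/2-1/2\ge w/4$ fails for $w\in[3,\pi+2)$ (at $w=3$ the left side is $1-\pi/4\approx 0.21$, the right side $0.75$). This is harmless since $(w-\pi/2)/2-1/2$ is still positive and grows linearly on $[3,\infty)$, so $\int_3^\infty e^{-K(F(w)-F(y))}\,\mathrm{d}w$ decays exponentially in $K$ uniformly over $y\in[-\pi/2,\pi/2]$; simply truncate at $w\ge\pi+2$ or replace $w/4$ by a smaller constant multiple such as $w/20$.
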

\noindent

Now, fix an initial value $z=(r_0,\psi)$ with $r_0>R$ and $\psi \in [0,\pi]$. We compare the auxiliary process $\tilde{\phi}^R(\psi)$ with the angular process $\phi_t(z)$ (not interpreted modulo $\pi$). Therefore, we introduce the stopping times
\begin{equation*}
\begin{split}
\nu_a(z)&:= \inf\{t>0\colon \phi_t(z)\leq a\}\text{,}\\
\nu^R(z)&:= \inf\{t>0\colon r_t(z)\leq R\}
\text{.}
\end{split}
\end{equation*}
For $t\leq \nu^R(z)$, we have
\begin{equation*}
\begin{split}
\phi_t &=\psi -\int_0^t r_s^\gamma \cos^2(\phi_s)\mathrm{d}s +\sigma W_t\\
&\leq \psi -R^\gamma\int_0^t  \cos^2(\phi_s)\mathrm{d}s +\sigma W_t.\\
\end{split}
\end{equation*}
The following lemma yields the comparison between the auxiliary process $\tilde{\phi}$ and the angular process $\phi$.

\begin{lemma}\label{furtherlemma}
We have
\[
 \phi_t(z)\leq \tilde{\phi_t}(\psi) \qquad \text{ for all } t\leq \nu^R(z) \text{ a.s.}.
\]
In particular,
\[
\nu_a(z)\wedge\nu^R(z) \leq \tilde{\nu}_a(\psi)
\text{.}
\]
\end{lemma}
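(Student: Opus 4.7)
The plan is to run a pathwise comparison that exploits the fact that both $\phi_t(z)$ and $\tilde\phi_t(\psi)$ are driven by the \emph{same} Brownian motion $W$, so that the noise cancels in the difference $D_t := \tilde\phi_t(\psi)-\phi_t(z)$ and I am left with a random but ordinary integral equation. Writing the two SDEs in integral form and subtracting, one gets, for $t\leq \nu^R(z)$,
\begin{equation*}
D_t \;=\; \int_0^t \bigl(r_s^\gamma-R^\gamma\bigr)\cos^2(\phi_s)\,\dd s \;+\; R^\gamma\int_0^t \bigl(\cos^2(\phi_s)-\cos^2(\tilde\phi_s)\bigr)\,\dd s,
\end{equation*}
with $D_0=0$. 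The first integrand is nonnegative on $\{t\leq \nu^R(z)\}$ because there $r_t\geq R$. The second integrand, by the mean value theorem applied to the globally $1$-Lipschitz function $\cos^2$, can be written as $-\alpha(s) D_s$ with some progressively measurable random coefficient $\alpha$ satisfying $|\alpha(s)|\leq R^\gamma$.

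Hence $D_t$ solves, pathwise on $[0,\nu^R(z)]$, the linear ODE $\dot D_t = f(t)-\alpha(t)D_t$ with $f(t):=(r_t^\gamma-R^\gamma)\cos^2(\phi_t)\geq 0$ and $D_0=0$. The integrating factor representation
\begin{equation*}
D_t \;=\; \int_0^t \exp\!\Bigl(-\!\int_s^t \alpha(u)\,\dd u\Bigr) f(s)\,\dd s
\end{equation*}
yields $D_t\geq 0$, i.e.~$\phi_t(z)\leq \tilde\phi_t(\psi)$, almost surely on $\{t\leq \nu^R(z)\}$. This gives the first claim.

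The stopping time inequality follows immediately. On the event $\{\tilde\nu_a(\psi)\geq \nu^R(z)\}$ the bound $\nu_a(z)\wedge \nu^R(z)\leq \nu^R(z)\leq \tilde\nu_a(\psi)$ is trivial. On the complementary event, at time $t:=\tilde\nu_a(\psi)<\nu^R(z)$ the first part of the lemma applies and gives $\phi_t(z)\leq \tilde\phi_t(\psi)=a$, so $\nu_a(z)\leq t$ and therefore $\nu_a(z)\wedge \nu^R(z)\leq \tilde\nu_a(\psi)$.

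The only subtlety I anticipate is the pointwise definition of $\alpha$ on the null set $\{D_s=0\}$; this is handled by choosing any bounded Borel representative (e.g.~setting $\alpha(s)=0$ there), since the identity $\cos^2(\phi_s)-\cos^2(\tilde\phi_s)=-\frac{\alpha(s)}{R^\gamma}D_s$ with $|\alpha|\leq R^\gamma$ is all that the Grönwall-style argument uses. Otherwise the proof is a completely standard one-dimensional SDE comparison, made easy by the common noise.
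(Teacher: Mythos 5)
Your argument is correct and is essentially the same pathwise comparison as the paper's: both use that the common noise cancels in $\tilde\phi_t-\phi_t$, that $r_t\geq R$ up to $\nu^R(z)$, and that $\cos^2$ is Lipschitz to obtain a one-sided linear differential inequality with zero initial condition. The paper phrases the conclusion via Gr\"onwall applied to $g_t=\phi_t-\tilde\phi_t$, while you use the integrating-factor representation for $D_t=-g_t$ (which in fact makes the sign argument slightly more transparent, since it avoids the absolute value $|g_t|$ on the right-hand side), but this is a presentational difference, not a different route.
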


\begin{proof}
  Let $g_t:=\phi_t - \tilde{\phi}_t$ and let $L>0$ be a Lipschitz constant of the function $y \mapsto \cos^2(y)$. Then
$$
g'_t \leq R^\gamma L |g_t|,\;t \in [0,\nu^R(z)].
$$
Since $g_0=0$, Gronwall's inequality implies $g_t\le 0$ on the interval $[0,\nu^R(z)]$, so the statement of the lemma follows.
\end{proof}
\noindent
The following lemma follows from \eqref{monotonicity}, Lemma \ref{th:double_integral}, and Lemma \ref{furtherlemma}.
\begin{lemma}
\label{lemma:timebound}
There exists a constant $C=C(\sigma)$ such that for all $r_0 > R\geq 1$, $a\in[0,\pi/2)$ we have
\begin{equation*}
    \mathbb{E}\left[\sup_{z\in\{r_0\}\times [0,\pi]} \nu_a(z)\wedge\nu^R(z)\right] \leq C R^{-\frac{2}{3}\gamma}
\text{.}
\end{equation*}
\end{lemma}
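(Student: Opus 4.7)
The plan is to chain the three cited ingredients in the obvious way, with a small amount of bookkeeping to turn the pointwise comparison into a supremum bound and to convert $K$-decay into $R$-decay.

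First I would use Lemma \ref{furtherlemma} pointwise: for every $z=(r_0,\psi)$ with $r_0>R$, we have $\nu_a(z)\wedge \nu^R(z) \leq \tilde{\nu}_a(\psi)$ almost surely. Taking the supremum over $\psi \in [0,\pi]$ on the right is harmless because the bound already holds on a single probability-one event simultaneously for each $\psi$; the supremum of $\tilde{\nu}_a(\psi)$ over $\psi \in [0,\pi]$ is attained at $\psi = \pi$ by the monotonicity of the one-dimensional SDE in the initial condition (which is exactly what the identity in \eqref{monotonicity} records). Therefore
\[
\sup_{z\in\{r_0\}\times[0,\pi]}\bigl(\nu_a(z)\wedge \nu^R(z)\bigr) \leq \sup_{\psi\in[0,\pi]}\tilde{\nu}_a(\psi) = \tilde{\nu}_a(\pi)\quad\text{a.s.}
\]

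Second, I would take expectations and invoke \eqref{monotonicity} to identify
\[
\mathbb{E}\,\tilde{\nu}_a(\pi) = u_a(\pi) = \frac{2}{\sigma^2}\int_a^{\pi}\int_\beta^\infty e^{-K(A(z)-A(\beta))}\,\mathrm{d}z\,\mathrm{d}\beta,
\]
and then bound this from above by extending the outer integral from $[a,\pi]$ to $[0,\pi]$, which is legitimate since the integrand is nonnegative. Lemma \ref{th:double_integral}, applied with $k_0 := 2/\sigma^2$ (so that $K = \tfrac{2}{\sigma^2}R^\gamma \geq k_0$ for all $R\geq 1$), then gives an estimate of the form $C' K^{-2/3}$.

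Finally, substituting $K = \tfrac{2}{\sigma^2} R^\gamma$ converts $K^{-2/3}$ into $(\tfrac{2}{\sigma^2})^{-2/3}R^{-\tfrac{2}{3}\gamma}$, and absorbing the $\sigma$-dependent prefactors into a single constant $C = C(\sigma)$ yields the claimed bound, uniformly in $a\in[0,\pi/2)$ (only the monotone enlargement $[a,\pi]\subset[0,\pi]$ is used to make the estimate $a$-free). There is no real obstacle here: the whole content of the lemma is already packaged in the three inputs. The only point requiring mild care is the interchange of $\sup_\psi$ and $\mathbb{E}$, which works thanks to the monotonicity of $\tilde\phi$ in its initial condition so that the essential supremum is attained deterministically at $\psi = \pi$.
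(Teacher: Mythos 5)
Your proof is correct and takes exactly the route the paper intends: the paper gives no explicit argument for this lemma, stating only that it ``follows from \eqref{monotonicity}, Lemma \ref{th:double_integral}, and Lemma \ref{furtherlemma}'', and your chaining of the pathwise comparison, the monotonicity in the initial angle that identifies the expectation with $u_a(\pi)$, and the $K^{-2/3}$ integral estimate (with $k_0=2/\sigma^2$ so that $R\geq 1$ suffices) is precisely that argument spelled out.
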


\subsection{Step down}
\label{sec:step_down}
Unfortunately, crossing the critical region quickly is not enough, since trajectories will reenter inevitably.
Therefore, we have to use the strong negative drift $-r_t^w\cos^2(\phi_t)$ outside the critical region to compensate the growth.

We fix $1>\varepsilon>\tilde{\varepsilon}>0$ (small), $\sigma>0, T\geq 1, d>0$. For $k\in\mathbb{N}_0$, we set $R_k:= 2^k$ and define a sequence of regions which will be useful showing how the radius decreases outside of the critical region. Namely, we define
\begin{align*}
\mathsf{B}_k:=[R_k,\infty) \times[\pi/2-\varepsilon,\pi)\quad\text{and}\quad I(k) := \{R_k\}\times [0,\pi]\text{.}
\end{align*}
Next we define the following sequences of stopping times to track the dynamics movement through the regions.
\begin{align*}
\overline{\tau}_{k+1}&:= \inf\left\lbrace t\geq 0\colon \sup_{z\in I(k)} r_t(z) \geq R_{k+1} \right\rbrace\text{,}\\ 
\underline{\tau}_{k-1}&:= \inf\left\lbrace t\geq 0\colon \sup_{z\in I(k)} r_t(z) \leq R_{k-1} \right\rbrace\text{,}\;k \geq1,\\
\underline{\tau}_k(z)&:= \inf\left\lbrace t\geq 0\colon r_t(z) \leq R_k \right\rbrace\text{,}\\
\tau_k(z) &:= \inf\left\lbrace t\geq 0\colon (r_t(z),\phi_t(z)) \notin \mathsf{B}_k \right\rbrace=\nu_{\pi/2-\varepsilon}(z)\wedge\nu^{R_k}(z)\text{.}
\end{align*}
Fixing the positive constant $\tilde{c}:= \cos^2\left(\frac{\pi}{2}-\varepsilon+\tilde{\varepsilon}\right)$, we next define a collection of events which will be used to control the stopping times just defined.
\begin{align*}
\overline{\text{BBM}}_k&:=\left\lbrace \sup_{0\leq s\leq t\leq T}\sigma(W_t-W_s)-\frac{\tilde{c}}{2}R_{k-2}^\gamma (t-s)\leq \tilde{\varepsilon}/2\right\rbrace\text{,}\;k\geq 2, \\
\underline{\text{BBM}}_k&:=\left\lbrace \inf_{0\leq s\leq t\leq T}\sigma(W_t-W_s)+R_{k+1}^\gamma (t-s)\geq -d\right\rbrace \text{,}\\
\text{BBM}_k&:= \overline{\text{BBM}}_k\cap\underline{\text{BBM}}_k\text{.}
\end{align*}
In the following, we estimate the probability of $\lbrace \underline{\tau}_{k-1}\geq \overline{\tau}_{k+1}\rbrace$ (for large $k$). We begin with a few observations which will help illuminate the structure of the objects we just defined.

The constant $\tilde{c}$ will serve as a lower bound for $\cos^2(\phi_t)$ outside of the critical region. We will refer to $\mathcal{G}:=\{(\phi,r)\colon \cos^2(\phi)\geq \tilde{c},r\geq 1\}$ as the {\em good region}. Inside the good region the drift in the $r$-component is negative (at least for large $r$) and furthermore this region is insensitive to noise.
The event $\overline{\text{BBM}}_k$ will guarantee that one-point motions which leave the critical region will not directly reenter, whereas $\underline{\text{BBM}}_k$ ensures that the trajectories do not move through the good region too quickly, see proof of Proposition \ref{th:core}, in particular the bounds on $\phi_{t\wedge\vartheta}(z)$.

\subsubsection{Velocity in the radial direction}\label{veloradial}
In the following we derive an estimate for the minimal time it takes to go from level $r\ge 1$ to a higher level $R>r$. Because of
\[
\mathrm{d}r_t =\left(-r_t^w\cos^2(\phi_t)+r^v_t\right)\mathrm{d}t \leq r^v_t\mathrm{d}t
\]
one can conclude, via a comparison argument, that the process $(\tilde{r}_t)_{t\geq 0}$ solving the (deterministic) equation
\[
\frac{\dd}{\dd t}\tilde{r}_t =\tilde{r}_t^v, \qquad \tilde{r}_0=r
\]
is an upper bound for the radial component $(r_t)_{t\geq 0}$ starting anywhere in $[0,r]$. This ordinary one-dimensional differential equation can be solved explicitly
\[
\tilde{r}_t =\frac{r}{\left(1-(v-1)r^{v-1}t \right)^{\frac{1}{v-1}}}.
\]
Hence, 
\[
d(r,R):= \frac{1}{v-1}\left(\frac{1}{r^{v-1}}-\frac{1}{R^{v-1}}\right)\text{,} \qquad R\geq r\ge 1
\]
is a lower bound for the time the radial component $(r_t)_{t\geq 0}$ needs to go from $r$ to $R$.

As long as  a trajectory stays in the good region and its radial component is at least 
$\rho_0:=(\tilde c/2)^{1/(v-w)}$ we have
\[
\mathrm{d}r_t =\left(-r_t^w\cos^2(\phi_t)+r^v_t\right)\mathrm{d}t\leq \left(-\tilde{c}r_t^w+r^v_t\right)\mathrm{d}t  \leq -\frac{\tilde{c}}{2}r_t^w\mathrm{d}t
\]
and therefore, the process $(\hat{r}_t)_{t\geq 0}$ solving the (deterministic) equation
\[
\frac{\dd}{\dd t}\hat{r}_t =-\frac{\tilde{c}}{2}\hat{r}_t^w, \qquad \hat{r}_0=R
\]
is an upper bound for the radial component $(r_t)_{t\geq 0}$ starting anywhere in $[0,R]$ as
long as $\hat{r}_t \geq \rho_0$. 
Solving the ODE, we get
\[
\hat{r}_t =\frac{R}{\left(1+(w-1)\frac{\tilde{c}}{2} R^{w-1}t \right)^{\frac{1}{w-1}}}.
\]
Therefore,
\[
d_{\frac{\tilde{c}}{2},w}(R,r):= \frac{2}{\tilde{c}(w-1)}\left(\frac{1}{r^{w-1}}-\frac{1}{R^{w-1}}\right)\text{,} \qquad R\geq r\geq\rho_0
\]
is an upper bound for the time the radial component $(r_t)_{t\geq 0}$ starting somewhere in $(r,R]$ needs go to $r$ inside the good region $\mathcal{G}$.

\subsubsection{Proof of the step down}
We now define the following events in the interest of brevity (recalling the notation from the start of Section~\ref{sec:step_down}):
\begin{multline*}
        A_k := \left\lbrace\underline{\tau}_{k-1}\leq \theta_k \leq \overline{\tau}_{k+1}\right\rbrace   \text{,}\quad
B_k := \left\lbrace\sup_{z\in I(k)} \tau_{k-2}(z)\leq d\left(R_k,R_{k+1}\right)\right\rbrace\\
\text{and}\quad D_k := \left\lbrace\sup_{z\in I(k)} \underline{\tau}_{k-2}(z) > \theta_k\right\rbrace
\end{multline*}
where  $\theta_k =d\left(R_k,R_{k+1}\right)+d_{\tilde{c}/2,w}(R_{k+1},R_{k-2})$, $k \ge 2$. Note that 
there exists some $\check c>0$ such that
\begin{equation}\label{eq:theta}
\theta_k\sim\check c2^{-k(v-1)},\; k \to \infty.
\end{equation}

\begin{figure}[H] 






\includegraphics{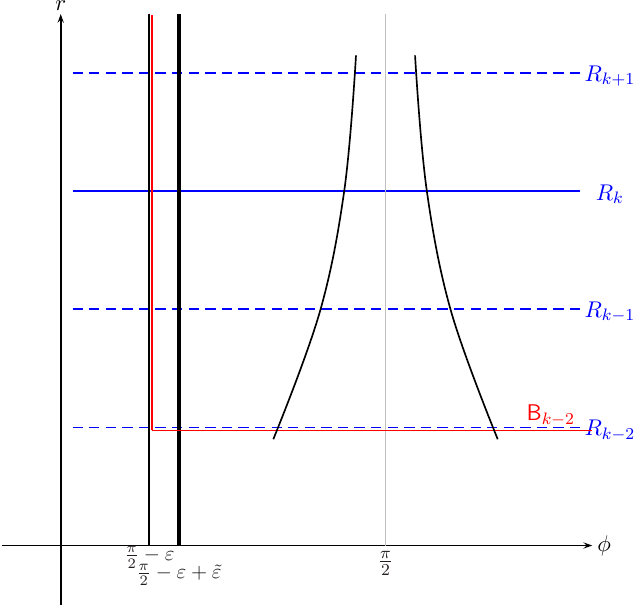}
\caption{Step Down}
\label{fig:stepdown}
\end{figure}



We begin by showing that the probability of $A_k$ converges to $1$ as $k$ tends to infinity. On $A_k$ all points that start at the level $R_k$ are below $R_{k-1}$ before time $\theta_k$ while none ever exceeded the level $R_{k+1}$ before. We call this ``\textit{step down}''.
We estimate the probability of $A_k$ in terms of the probability of $B_k$ and $\text{BBM}_k$, of which we can compute explicit bounds. $D_k$ is just an auxiliary event, which cannot occur at the same time as $B_k$ and $\text{BBM}_k$, as we show below. 

On the event $\{\sup_{z \in I(k)} \tau_{k-2}(z) \leq d(R_k,R_{k+1})\}$ all one-point motions starting in $I(k)$ leave the unbounded box $\mathsf{B}_{k-2}$ 
(see Figure \ref{fig:stepdown}) 
before they can pass $R_{k+1}$, since the time to do so is at least $d(R_k,R_{k+1})$. Leaving this box means being below $R_{k-2}$ or being strictly inside the good region. We will show that if, in addition, $\text{BBM}_k$ holds, then  trajectories in the good region will necessarily go under $R_{k-2}$. Afterwards there is not enough time to go above $R_{k-1}$ again, hence all points are simultaneously below $R_{k-1}$ at or before time $\theta_k$.
\begin{proposition}
\label{th:core}
For sufficiently large $k \ge 2$, the following hold true
\begin{itemize}
\item[(i)] $B_k \cap \mathrm{BBM}_k \cap D_k =\emptyset$,
\item[(ii)] $B_k \cap \mathrm{BBM}_k =B_k \cap \mathrm{BBM}_k\cap D_k^\mathrm{c} \subset A_k$,
\item[(iii)] $\mathbb{P}\left(B_k^\mathrm{c}\right),\mathbb{P}\left(\mathrm{BBM}_k^\mathrm{c}\right)\rightarrow 0$ as $k\to \infty$.
\end{itemize}
\end{proposition}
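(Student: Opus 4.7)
The plan is to take the three parts in order, exploiting the geometric structure of the boxes $\mathsf{B}_k$ together with the deterministic ODE comparisons from Section~\ref{veloradial}.

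For (i), the goal is to show $B_k\cap\overline{\text{BBM}}_k\subset D_k^{\mathrm c}$. On $B_k$ every $z\in I(k)$ exits $\mathsf{B}_{k-2}$ by time $d(R_k,R_{k+1})$, either through the radial boundary $r=R_{k-2}$ (in which case $\underline\tau_{k-2}(z)\le d(R_k,R_{k+1})\le\theta_k$ already) or through the angular boundary $\phi=\pi/2-\varepsilon$. In the latter case the SDE yields, while $r\ge R_{k-2}$ and $\phi$ stays in the good region,
\[
\phi_t-\phi_{\tau_{k-2}}\le -\tilde c\,R_{k-2}^\gamma(t-\tau_{k-2})+\sigma(W_t-W_{\tau_{k-2}}).
\]
On $\overline{\text{BBM}}_k$ the noise contribution is absorbed by half of the drift, leaving $\phi_t-\phi_{\tau_{k-2}}\le\tilde\varepsilon/2$, hence $\phi_t\le\pi/2-\varepsilon+\tilde\varepsilon$ and the trajectory is trapped in $\mathcal G$. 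There $\dot r\le-(\tilde c/2)r^w$, and since the deterministic upper envelope $\tilde r$ gives $r_{\tau_{k-2}}\le R_{k+1}$, the radius descends to $R_{k-2}$ in at most $d_{\tilde c/2,w}(R_{k+1},R_{k-2})$ further time. Summing the two contributions yields $\underline\tau_{k-2}(z)\le\theta_k$, so $D_k$ fails. ($\underline{\text{BBM}}_k$ is used alongside to prevent $\phi$ from escaping through the opposite angular boundary.)

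For (ii), the equality is immediate from (i). To get the inclusion in $A_k$ I need both $\underline\tau_{k-1}\le\theta_k$ and $\theta_k\le\overline\tau_{k+1}$. For the first, on $D_k^{\mathrm c}$ each trajectory has reached $R_{k-2}$ by some time $\le\theta_k$, and the ODE bound $\dot r\le r^v$ implies that climbing from $R_{k-2}$ back to $R_{k-1}$ requires at least $d(R_{k-2},R_{k-1})=2^{2(v-1)}d(R_k,R_{k+1})$, which exceeds $\theta_k$ for large $k$ thanks to \eqref{eq:theta}; hence $r_{\theta_k}(z)<R_{k-1}$ for every $z\in I(k)$. For the second, $r_t<R_{k+1}$ for $t<d(R_k,R_{k+1})$ directly from the ODE bound; for $t\in[d(R_k,R_{k+1}),\theta_k]$ the case analysis of (i) applies—either the trajectory lies in the good region with monotonically decreasing radius (hence $\le R_{k+1}$), or it has fallen below $R_{k-2}$, from which returning to $R_{k+1}$ would need time $d(R_{k-2},R_{k+1})>\theta_k$.

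For (iii), Markov's inequality combined with Lemma~\ref{lemma:timebound} at level $R=R_{k-2}$ gives
\[
\mathbb{P}(B_k^{\mathrm c})\le\frac{\mathbb{E}\bigl[\sup_{z\in I(k)}\tau_{k-2}(z)\bigr]}{d(R_k,R_{k+1})}\le C\,\frac{R_{k-2}^{-2\gamma/3}}{2^{-k(v-1)}}\;\sim\;2^{-k(2\gamma/3+1-v)},
\]
which vanishes precisely because of the hypothesis $\tfrac 23\gamma+1>v$. The probabilities $\mathbb P(\overline{\text{BBM}}_k^{\mathrm c})$ and $\mathbb P(\underline{\text{BBM}}_k^{\mathrm c})$ tend to $0$ by the standard fact that the supremum of a Brownian motion minus a linear drift with drift-rate $\to\infty$ on a fixed interval $[0,T]$ converges to $0$ in probability; specifically, a reflection-principle estimate of the form $\mathbb P(\sup_{0\le s\le t\le T}\sigma(W_t-W_s)-ct>\eta)\lesssim e^{-c\eta/\sigma^2}$ (and its reverse) suffices.

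The main obstacle is the bookkeeping in (i)--(ii): one must carefully separate the ``exit through $\phi$'' and ``exit through $r$'' scenarios and match each with the right deterministic timing estimate, using \eqref{eq:theta} to compare $\theta_k$ with $d(R_{k-2},R_{k-1})$ and $d(R_{k-2},R_{k+1})$. Once this structural argument is in place, (iii) reduces to checking the critical exponent $2\gamma/3+1-v$, whose positivity is exactly the parameter condition assumed in Theorem~\ref{th:main}.
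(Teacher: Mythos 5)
Your proposal is correct and follows essentially the same argument as the paper's: the same case split at $\tau_{k-2}(z)$, the same two-sided control on $\phi$ using $\overline{\text{BBM}}_k$ (to prevent re-entry through the critical boundary) and $\underline{\text{BBM}}_k$ (to prevent escape through the opposite boundary), the same comparison of the descent time $d_{\tilde{c}/2,w}(R_{k+1},R_{k-2})$ against the minimal dwell time in the good region (which is where $w-1>\gamma$ enters), and the same Chebyshev bound via Lemma~\ref{lemma:timebound} for $\mathbb{P}(B_k^{\mathrm c})$. The only differences are cosmetic: the paper formalizes the trapping step in (i) with the explicit stopping time $\vartheta$ and a two-sided inequality on $\phi_{t\wedge\vartheta}$ where you argue a little more loosely (your phrase ``the trajectory is trapped in $\mathcal G$'' after only the upper bound needs the $\underline{\text{BBM}}_k$ lower bound to be complete, as you acknowledge parenthetically); in (ii) the paper compares $\theta_k$ against $\tilde\theta_k = d_{1,w}(R_k,R_{k-2}) + d(R_{k-2},R_{k-1})$ whereas you drop the first term and compare against $d(R_{k-2},R_{k-1})$ alone, which also works and is slightly simpler; and for $\mathbb{P}(\text{BBM}_k^{\mathrm c})\to 0$ the paper invokes the explicit Laplace-transform estimate of Lemma~\ref{le:downfall} (Appendix B) where you sketch a reflection-principle bound — the same mechanism, just realized differently.
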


\begin{proof}[Proof of Proposition~\ref{th:core}]
  We assume that $k$ is so large that $R_{k-2}\ge \rho_0$ (defined in Subsection \ref{veloradial}) and that  $\theta_k \leq T$. (Recall that $T\ge 1$ 
  was fixed at the start of Section~\ref{sec:step_down}.) \\
  
\textit{Proof of claim (i):}
On 
\begin{align*}
\lbrace\sup_{z\in I(k)} \tau_{k-2}(z) \leq d(R_k,R_{k+1})\} \cap\text{BBM}_k \cap \{\sup_{z\in I(k)} \underline{\tau}_{k-2}(z) > \theta_k\rbrace\,,    
\end{align*}
there is a $z\in I(k)$, such that
\begin{equation}\label{tau}
    \tau_{k-2}(z) \leq d(R_k,R_{k+1}) \quad \text{and}\quad \underline{\tau}_{k-2}(z) \stackrel{}{>} \theta_k
\text{,}
\end{equation}
which implies
\[
\phi_{\tau_{k-2}(z)}(z)=\frac{\pi}{2}-\varepsilon \quad \text{and}\quad r_{\tau_{k-2}(z)}(z) \in [R_{k-2},R_{k+1}]
\text{.}
\]
Now we show that on the set $\text{BBM}_k$ the process $(\phi(z),r(z))$ will spend sufficient time in the good region $\mathcal{G}$ that it will necessarily go below $R_{k-2}$ before time $\theta_k$. Since this is a contradiction to $\underline{\tau}_{k-2}(z) > \theta_k$, $B_k \cap \text{BBM}_k \cap D_k$ is empty.

Define $\vartheta\coloneqq\inf\{t\geq \tau_{k-2}(z)\colon \cos^2(\phi_t(z)) \leq \tilde{c}\}\wedge \underline{\tau}_{k-2}(z)$. Let $t \geq \tau_{k-2}(z)$, then, using the fact that $\theta_k \leq T$,  we can conclude
\begin{equation*}
\begin{split}
\phi_{t\wedge\vartheta}(z) &= \phi_{\tau_{k-2}(z)}(z)-\int_{\tau_{k-2}(z)}^{t\wedge\vartheta} (r_s(z))^\gamma\cos^2(\phi_s(z))\mathrm{ds} +\sigma\left(W_{t\wedge\vartheta}-W_{\tau_{k-2}(z)}\right)\\
&\geq \frac{\pi}{2}-\varepsilon-R_{k+1}^\gamma(t\wedge\vartheta-\tau_{k-2}(z))+\sigma\left(W_{t\wedge\vartheta}-W_{\tau_{k-2}(z)}\right)\\
&\geq \frac{\pi}{2}-\varepsilon-d-2R_{k+1}^\gamma(t\wedge\vartheta-\tau_{k-2}(z))\\
\text{and} \hspace{20pt} &\\
\phi_{t\wedge\vartheta}(z) &\leq \frac{\pi}{2}-\varepsilon -R_{k-2}^\gamma\tilde{c}(t\wedge\vartheta-\tau_{k-2}(z))+\sigma\left(W_{t\wedge\vartheta}-W_{\tau_{k-2}(z)}\right)\\
&\leq \frac{\pi}{2}-\varepsilon+\frac{\tilde{\varepsilon}}2 -R_{k-2}^\gamma\frac{\tilde{c}}{2}\left(t\wedge\vartheta-\tau_{k-2}(z)\right)< \frac{\pi}2 -\varepsilon + \tilde{\varepsilon}
\text{,}
\end{split}
\end{equation*}
where, for a moment, we regard the process $\phi$ as $\R$-valued rather than $[0,\pi]$-valued.

The upper bound shows that the process $\phi(z)$ does not exit the interval $[-\pi/2+\varepsilon-\tilde\varepsilon,\pi/2 -\varepsilon+\tilde \varepsilon]$ via 
the right end point up to time $\vartheta$ and the lower bound shows that 
the process cannot hit the left end point before the minimum of $\underline{\tau}_{k-2}(z)$
and 
\[
t_0=\tau_{k-2}(z) + \underbrace{\frac{\pi-2\varepsilon+\tilde{\varepsilon}-d}{2R_{k+1}^\gamma}}_{\text{\parbox[t][][t]{12em}{\small least amount of time\\ spent in good region}}}\text{.}
\]
We know from the previous subsection that the time it takes in the good region to go from $R_{k+1}$ down to $R_{k-2}$ is at most
\[
d_{\frac{\tilde{c}}{2},w}(R_{k+1},R_{k-2})\coloneqq \frac{2}{\tilde{c}(w-1)}\left(\frac{1}{R_{k-2}^{w-1}}-\frac{1}{R_{k+1}^{w-1}}\right)
\]
for $k$ large enough which is smaller than $t_0-\tau_{k-2}(z)$ for $k$ large enough
since $w-1>\gamma$. 
This implies 
\begin{multline*}
\underline{\tau}_{k-2}(z)\leq \tau_{k-2}(z)+d_{\frac{\tilde{c}}{2},w}(R_{k+1},R_{k-2})\\ 
\leq d\left(R_k,R_{k+1}\right)+d_{\frac{\tilde{c}}{2},w}(R_{k+1},R_{k-2}) = \theta_k
\text{,}    
\end{multline*}
which contradicts the second inequality in  \eqref{tau}.\\

\textit{Proof of claim (ii):}
The first equality is just a reformulation of claim (i).
On the event $\{ \sup_{z\in I(k)} \tau_{k-2}(z) \leq d(R_k,R_{k+1})\} \cap\text{BBM}_k \cap \{\sup_{z\in I(k)} \underline{\tau}_{k-2}(z) \leq \theta_k \}$ each trajectory starting in $I(k)$ hits level $R_{k-2}$ before time $\theta_k$. From the calculation in the proof of claim (i) we  know that none of these exceeds $R_{k+1}$ until $\theta_k$. On the other hand, the time it takes for any trajectory to go from $R_k$ to $R_{k-2}$ and then above $R_{k-1}$ is at least
\[
d_{1,w}(R_k,R_{k-2})+d(R_{k-2},R_{k-1}) \eqqcolon \tilde{\theta}_k.
\]
We claim that, for $k$ large enough, we have
\begin{align}\label{multi}
\begin{split}
\theta_k=d(R_k,R_{k+1})+&d_{\tilde{c}/2,w}(R_{k+1},R_{k-2})\\ \leq &d_{1,w}(R_k,R_{k-2})+d(R_{k-2},R_{k-1})=\tilde{\theta}_k
\text{.}    
\end{split}
\end{align}
Note that
\begin{align*}
d(R_{k-2},R_{k-1})- d(R_k,R_{k+1}) &= \frac{1}{v-1}\left( R_{k-2}^{1-v}-R_{k-1}^{1-v}-R_{k}^{1-v}+R_{k+1}^{1-v} \right)\\
&= \frac{1}{v-1}R_{k}^{1-v}\left( 4^{v-1}-2^{v-1}-1+2^{1-v} \right)\\
&=: \beta_v R_k^{1-v}.
\end{align*}
The function $x\mapsto 4^x-2^x-1+2^{-x}$ is strictly increasing on $[0,\infty)$ and $0$ at $0$. Since $v>1$ we get $\beta_v>0$. Note that, since $w>v$, the two remaining terms in 
\eqref{multi} decay faster than $R_k^{1-v}$ as $k \to \infty$ and therefore \eqref{multi}
holds true for all sufficiently large $k$. We conclude that at time $\theta_k$ all 
trajectories starting in $I(k)$ are below level $R_{k-1}$ and never hit level $R_{k+1}$ 
up to that time.\\

%

\textit{Proof of claim (iii):}
Note that $\tau_k(z)$ coincides with $\nu_{\pi/2-\varepsilon}(z)\wedge\nu^{R_k}(z)$ defined in Section \ref{sec:cross_crit}. Applying Chebyshev's inequality yields
\begin{align*}
\mathbb{P}\left(B_k^\mathrm{c}\right)&=\mathbb{P}\left(\sup_{z\in I(k)} \tau_{k-2}(z) > d(R_k,R_{k+1})\right)
\leq \frac{\mathbb{E}\left(\sup_{z\in I(k)} \nu_{\pi/2-\varepsilon}(z)\wedge\nu^{R_{k-2}}(z)\right)}{d(R_k,R_{k+1})}\,.
\end{align*}
Combining this estimate with Lemma~\ref{lemma:timebound} yields, for $k$ large, 
\begin{align*}
\mathbb{P}\left(B_k^\mathrm{c}\right)&\leq (v-1)C(\sigma) \frac{R_{k-2}^{-\frac{2}{3}\gamma}}{\dfrac{1}{R_{k}^{v-1}}-\dfrac{1
}{R_{k+1}^{v-1}}}\\
&=(v-1)C(\sigma) \frac{1}{4^{1-v}-8^{1-v}} R_{k-2}^{-\frac{2}{3}\gamma+v-1}= \textcolor{black}{\tilde{C}(v,\gamma,\sigma)R_k^{-\frac{2}{3}\gamma+v-1}}
\text{.}
\end{align*}
This bound tends to $0$ as $k\to\infty$ because of $v<2\gamma/3+1$.

To estimate $\mathbb{P}\left(\text{BBM}_k^\mathrm{c}\right)$, we need the following lemma which we prove in the appendix.
\begin{lemma}\label{le:downfall}
For every $\sigma>0$, $r \ge \sqrt{2}\sigma$, and  $\varepsilon>0$, we have
\[
\mathbb{P}\left(\sup_{0\leq s\leq t\leq T} \left(\sigma (W_t-W_s)-r(t-s)\right) > \varepsilon\right)\leq  \frac{4\sqrt{2}}{\sigma^2}\mathrm{e}^{T} r^2 \mathrm{e}^{-2\frac{\varepsilon}{\sigma^2} r}.
\]
\end{lemma}

\noindent This lemma immediately implies $\mathbb{P}\left(\overline{\text{BBM}}_k^\mathrm{c}\right) \rightarrow 0$ as $k\rightarrow \infty$.  Furthermore,
\[
\mathbb{P}\left(\underline{\text{BBM}}_k\right)=\mathbb{P}\left(\sup_{0\leq s\leq t\leq T}\left(\sigma(W_s-W_t)-R_{k+1}^\gamma (t-s)\right)\leq d\right)
\text{,}
\]
and therefore $\mathbb{P}(\text{BBM}_k^\mathrm{c})\leq \mathbb{P}\left(\overline{\text{BBM}}_k^\mathrm{c}\right)+\mathbb{P}\left(\underline{\text{BBM}}_k^\mathrm{c}\right)$ tends to 0 as $k \to \infty$.
\end{proof}
\noindent
The following corollary is a consequence of Proposition \ref{th:core} (ii) and (the proof of) (iii).
\begin{corollary}
\label{co:vanishing_probability}
There exists a constant $C$ such that 
\[
\mathbb{P}\left(A_k^c\right) \leq C \exp\Big\{-k\Big(\frac 23\gamma-v+1\Big)\log2\Big\}.
\]
for all $k \ge 2$. In particular, we have $\lim_{k \to \infty}\mathbb{P} \big(A_k\big)=1.$
\end{corollary}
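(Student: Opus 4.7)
The plan is to combine the two main estimates already established inside the proof of Proposition \ref{th:core} and read off the explicit rate. By part (ii) of that proposition, for all sufficiently large $k$ we have the inclusion $A_k^c\subset B_k^c\cup \mathrm{BBM}_k^c$, so a union bound gives $\mathbb{P}(A_k^c)\le\mathbb{P}(B_k^c)+\mathbb{P}(\mathrm{BBM}_k^c)$. It then suffices to bound the two terms on the right separately and check that the prescribed rate dominates both.

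For the first term, I would simply invoke the computation already done in the proof of (iii): combining Chebyshev's inequality with Lemma \ref{lemma:timebound} and the explicit value of $d(R_k,R_{k+1})$ yields
\[
\mathbb{P}(B_k^c)\le \tilde C(v,\gamma,\sigma)\,R_k^{-\frac{2}{3}\gamma+v-1}=\tilde C(v,\gamma,\sigma)\,2^{-k(\frac{2}{3}\gamma-v+1)},
\]
which is exactly the claimed exponential rate in $k$ (it is a genuine decay rate since the standing assumption $v<\frac{2}{3}\gamma+1$ makes the exponent negative).

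For the second term, I would use Lemma \ref{le:downfall} applied to $\overline{\mathrm{BBM}}_k$ with the drift $r=\frac{\tilde c}{2}R_{k-2}^\gamma$ and the level $\varepsilon=\tilde\varepsilon/2$, and to $\underline{\mathrm{BBM}}_k$ with $r=R_{k+1}^\gamma$ and $\varepsilon=d$ (noting that $\underline{\mathrm{BBM}}_k^c$ is the same kind of event after a sign flip of the Brownian increment, so the lemma applies). Both bounds are of the form $C'\,R_k^{2\gamma}\exp(-c'R_k^\gamma)$ with constants depending only on $\sigma,T,d,\tilde c,\tilde\varepsilon,\gamma$. Since this is doubly-exponential (in $k$) small, it is absorbed for large $k$ into the polynomial-in-$2^{-k}$ decay of $\mathbb{P}(B_k^c)$, and the sum remains bounded by $C\,2^{-k(\frac{2}{3}\gamma-v+1)}$ after possibly enlarging the constant. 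Finally, the inclusion $A_k^c\subset B_k^c\cup\mathrm{BBM}_k^c$ is only proven for $k$ large enough in Proposition \ref{th:core}; for the finitely many small $k\ge 2$ not covered, I would simply enlarge $C$ further so that the trivial bound $\mathbb{P}(A_k^c)\le 1$ is absorbed, giving the inequality uniformly for all $k\ge 2$. The ``in particular'' statement then follows immediately since the exponent $\frac{2}{3}\gamma-v+1$ is strictly positive.

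There is no real obstacle here: the result is essentially a bookkeeping corollary that packages the three conclusions of Proposition \ref{th:core} into a single quantitative tail bound. The only mildly delicate point is making sure that the exponential bound from the Brownian events is indeed dominated by the polynomial bound from $B_k^c$ and that the constant $C$ is chosen large enough to cover the small-$k$ regime.
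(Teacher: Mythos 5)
Your proposal is correct and follows essentially the same route as the paper: the paper's proof is the one-line observation that $\mathbb{P}(A_k^c)\le\mathbb{P}(B_k^c)+\mathbb{P}(\mathrm{BBM}_k^c)$ (from Proposition~\ref{th:core}(ii)) combined with the rate already computed in the proof of claim~(iii). You have simply spelled out in more detail the two implicit steps the paper leaves to the reader --- that the Brownian-event probability from Lemma~\ref{le:downfall} decays stretched-exponentially in $R_k$ and is therefore dominated by the polynomial-in-$R_k$ bound on $\mathbb{P}(B_k^c)$, and that finitely many small $k$ are absorbed by enlarging the constant $C$.
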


\begin{proof}
Since 
$$
\mathbb{P}\big(A_k^c\big)\le \mathbb{P}\big(B_k^c\big)+\mathbb{P} \big(\mathrm{BBM}_k^c\big)
$$
the estimate follows from the estimates in the proof of claim (iii) of the previous proposition.
\end{proof}

\section{Strong completeness and existence of an attractor}
\label{sec:markov argument}
It is almost clear from (the last part of) Corollary \ref{co:vanishing_probability} that under the assumptions of the main theorem, the SDE \eqref{eq:driving} is strongly complete or generates an RDS, 
i.e.~almost surely, {\em all} trajectories are global, for the following reason: if we start with a ball of radius $2^k$ around the origin, then, as we showed, with high probability its image under the (local) 
flow after a short time will be contained in a ball of radius $2^{k-1}$ and not a single point in the 
original ball will reach distance $2^{k+1}$ from the origin within this time. Iterating this argument 
indicates that the image of a bounded set under the (local) RDS cannot blow-up.

\subsection{Dominating jump process}
We will now define a piece-wise constant process $K_t$, $t \ge 0$ taking values in $\{2,3,...\}$ which, in some sense, dominates the radius of the image of a bounded subset of $\R^2$ under the RDS generated by \eqref{eq:driving}. Recall from Section~\ref{sec:step_down} the definitions $R_k:=2^k$, $I(k)= \{ R_k\}\times [0,\pi]$, $k \in \N_0$  and $\theta_k$ for $k\geq 2$. 
We define the process $(K_t)_{t\geq 0}$ starting in  $k\geq 2$ and 
associated stopping times in a recursive way as follows:

$\tau^0 \coloneqq 0$, $ K_0\coloneqq k$, $\tau^{n}\coloneqq \underline{\tau}^{n} \wedge \overline{\tau}^{n}$, $K_t=k$ for $t \in [0,\tau^1)$  and
\begin{equation*}
\begin{split}
K_t=
\begin{cases}
K_{\tau^{n-1}}-1 & \text{if } t\in[\tau^n,\tau^{n+1})\text{ and  }\underline{\tau}^n\leq\overline{\tau}^n \text{,} \\
K_{\tau^{n-1}}+1 & \text{if } t\in[\tau^n,\tau^{n+1})\text{ and  }\underline{\tau}^n>\overline{\tau}^n\\
\end{cases}
\end{split}
\end{equation*}
for $n \ge 1$, where
\begin{equation*}
\begin{split}
\underline{\tau}^{n}&\coloneqq \inf \left\lbrace t> \tau^{n-1}\colon 2 < \sup_{z\in I(K_{\tau^{n-1}})} r_{\tau^{n-1},t}(z) \leq R_{K_{\tau^{n-1}}-1}\right\rbrace \text{,} \\
\overline{\tau}^{n}&\coloneqq \inf \left\lbrace t> \tau^{n-1}\colon \sup_{z\in I(K_{\tau^{n-1}})} r_{\tau^{n-1},t}(z) \geq R_{K_{\tau^{n-1}}+1}\right\rbrace\wedge \left(\theta_{K_{\tau^{n-1}}}+\tau^{n-1}\right) \text{,} \\
\end{split}
\end{equation*}
where we denote the radial part of the solution at $t$ starting in $z$ at time $s\leq t$ 
by $r_{s,t}(z)$.
Note that $\underline{\tau}^n=\infty$ if $K_{\tau^{n-1}}=2$, so $K$ cannot jump to values smaller than 2 
(which would cause problems since $\theta_1$ is not defined).

We will see that $\tau:=\lim_{n\to\infty} \tau^{n}=\infty$ almost surely, so that $K_t$ is 
 well-defined for all $t \ge 0$.

Taking the minimum with $\theta_{K_{\tau^{n-1}}}+\tau^{n-1}$ in the definition of $\overline{\tau}^n$ yields a uniform upper bound for $\tau^n-\tau^{n-1}$, namely $\theta:=\sup_{k \ge 2}\theta_k$.

The following lemma follows easily from the definitions by induction on $n \in \N_0$.
\begin{lemma}
\label{lemma:dominator}
For any initial value $k\geq 2$ the following holds:
\begin{itemize}
\item[a)]
\[
\sup_{z\in I(k)} r_{\tau^{n}}(z) \leq R_{K_{\tau^n}}
\quad \text{for all } n\in\mathbb{N}_0,
\] 
\item[b)] 
\[
\sup_{z\in I(k)} r_{t}(z) \leq R_{K_{t}+1}
\quad\text{for all } t< \tau \text{.}
\]
\end{itemize}
\end{lemma}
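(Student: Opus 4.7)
The plan is induction on $n \in \mathbb{N}_0$, proving (a) and (b) simultaneously. The base case $n=0$ is immediate: $\tau^0=0$, $K_0=k$, and $r_0(z)=R_k=R_{K_0}$ for every $z \in I(k)=\{R_k\}\times[0,\pi]$, so (a) holds at $n=0$ and (b) is vacuous on the empty interval $[\tau^0,\tau^0)$.

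For the inductive step, assume (a) at $\tau^n$ and set $\kappa := K_{\tau^n}$. The heart of the argument is the pathwise comparison
\begin{equation*}
    \sup_{z \in I(k)} r_t(z) \;\le\; \sup_{w \in I(\kappa)} r_{\tau^n, t}(w) \qquad \text{for all } t \in [\tau^n, \tau^{n+1}],
\end{equation*}
which I would establish by combining the cocycle property with a maximum principle. By the cocycle identity, for $z \in I(k)$ and $t \ge \tau^n$ the point at time $t$ is the image of $(r_{\tau^n}(z),\phi_{\tau^n}(z))$ under the time-$\tau^n$-to-$t$ flow; by the inductive hypothesis this starting point lies in the closed disk $\overline{D}_\kappa := \{(r,\phi) : r \le R_\kappa\}$. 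Since, after the Lipschitz modification described in Section~\ref{subsec:stoch_flow}, equation (\ref{eq:driving}) generates a (local) stochastic flow of diffeomorphisms, the flow restricted to $\overline{D}_\kappa$ is a homeomorphism onto a compact set whose topological boundary is exactly the image of $I(\kappa)$. Because the radial coordinate $r$ has no interior local maxima on the complement of the origin, its maximum on that compact image is attained on this boundary, which gives the displayed inequality.

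With the comparison in hand, the conclusions follow directly from the definitions of $\underline{\tau}^{n+1}$ and $\overline{\tau}^{n+1}$. On $t \in [\tau^n, \tau^{n+1})$ we have $K_t = \kappa$, and the definition of $\overline{\tau}^{n+1}$ keeps the right-hand side of the comparison strictly below $R_{\kappa+1}$, proving (b). At $t = \tau^{n+1}$ I split cases: if $\tau^{n+1}=\underline{\tau}^{n+1}$, then $K_{\tau^{n+1}} = \kappa-1$ and the defining interval $(2, R_{\kappa-1}]$ forces the right-hand side to be $\le R_{\kappa-1}$; if $\tau^{n+1}=\overline{\tau}^{n+1}$, then $K_{\tau^{n+1}}=\kappa+1$ and by continuity the sup either just reaches $R_{\kappa+1}$ or falls strictly below it because the time cap $\theta_\kappa+\tau^n$ activates first, so it is at most $R_{\kappa+1}$ in either sub-case. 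In every case (a) holds at $\tau^{n+1}$, closing the induction.

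The principal obstacle is the flow-of-diffeomorphisms / maximum-principle step; everything else truly is mechanical, which matches the authors' description ``easily from the definitions.'' A secondary point is to confirm that no blow-up occurs on $[\tau^n, \tau^{n+1}]$ so that the flow is well-defined there: this is built into the induction itself, since (b) on this interval bounds $r$ by $R_{\kappa+1}$ and the interval length is at most $\theta_\kappa \le \theta$.
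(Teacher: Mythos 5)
Your proof is correct and supplies exactly the details behind the paper's one-line remark that the lemma ``follows easily from the definitions by induction on $n$.'' The key step you isolate---that the flow of homeomorphisms sends the circle $I(K_{\tau^n})$ onto the topological boundary of the image of the closed disk of radius $R_{K_{\tau^n}}$, so that the radial maximum over the evolved disk is attained on the evolved circle and hence equals the supremum over $I(K_{\tau^n})$ appearing in the definitions of $\underline{\tau}^{n+1}$ and $\overline{\tau}^{n+1}$---is precisely the point the paper glosses over, and your bootstrap ruling out blow-up on each $[\tau^n,\tau^{n+1}]$ (via the bound by $R_{K_{\tau^n}+1}$ and the uniform time cap $\theta$) closes the induction cleanly.
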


\subsection{Semi-Markov property}
\label{sec:semimarkov}
In this subsection we recall parts of the theory of Semi-Markov processes and Markov renewal processes. For a more detailed description we refer to \cite[p. 313ff]{cinlar75}.

Let, for each $n\in\mathbb{N}_0$, $X_n$ be a random variable taking values in a countable set $E$ and $T_n$ an $\mathbb{R}_+$-valued random variable, such that $0=T_0< T_1<T_2 < \dots $.
\begin{definition}
We call the stochastic process $(X_n,T_n)_{n\in\mathbb{N}_0}$\textit{ Markov renewal process with state space $E$}, if
\begin{equation*}
\begin{split}
&\mathbb{P}\left(X_{n+1}=j, T_{n+1}-T_n\leq t \vert X_0,\dots,X_n;T_0,\dots T_n \right)\\
& \hspace{0pt} = \mathbb{P}\left(X_{n+1}=j, T_{n+1}-T_n\leq t \vert X_n\right)
\end{split}
\end{equation*}
holds for all $n\in\mathbb{N}$, $j\in E$ and $t\in\mathbb{R}_+$.
The process $Z_t= X_n$ for $t\in [T_n,T_{n+1})$ is called a \textit{Semi-Markov process} and $(X_n)_{n\in\mathbb{N}_0}$ its \textit{embedded Markov chain}.
\end{definition}
\begin{remark}
A Markov renewal process is called \textit{time-homogeneous}, if in addition
\[
\mathbb{P}\left(X_{n+1}=j, T_{n+1}-T_n\leq t \vert X_n=i\right)=\mathbb{P}\left(X_{1}=j, T_{1}\leq t \vert X_0=i\right)
\]
holds for any $n\in\mathbb{N}$, $i,j \in E$ and $t\in\mathbb{R}_+$.
\end{remark}

In the following we only work with time-homogeneous processes.

\noindent
We denote for $i,j \in E$
\begin{equation*}
\begin{split}
\mathbb{P}_i & := \mathbb{P}\left( \cdot \vert X_0=i\right)\text{,}\\
m(i) &:= \mathbb{E}_i T_1\text{, where }\mathbb{E}_i  \text{ is the expectation under } \mathbb{P}_i\text{,}\\
P_{i,j}& \mathbb{P}_i\left(X_{1}=j\right) = \lim_{t\to\infty}\mathbb{P}_i\left(X_{1}=j, T_{1}\leq t\right)\text{,}\\
P&:= \left( P_{i,j}\right)_{i,j\in E}\text{.}
\end{split}
\end{equation*}
\begin{definition}
A Semi-Markov process is called \textit{irreducible} resp.~\textit{recurrent} if the corresponding embedded Markov chain is irreducible resp.~recurrent.
Further, let $S_1^{j}, S_2^j, \dots$ be a sub-sequence of $T_1, \dots$, such that $S_1^{j} < S_2^{j} <\cdots$ and $Z_{S_n^j}=j$ for every $n\in\mathbb{N}$. $(Z_t)_{t\geq 0}$ is called \textit{periodic with period $\delta$} if $S_1^j, S_2^j-S_1^j, S_3^j-S_2^j, \dots$ take values in a discrete set $\{0, \delta, 2\delta, \dots\}$ where $\delta>0$ is the largest such number. If there is no such $\delta>0$,  $(Z_t)_{t\geq 0}$ is called \textit{aperiodic}.
\end{definition}
\begin{theorem}
\label{th:semi_covergence}
If the Semi-Markov process is irreducible, recurrent and  aperiodic, $\nu$ is a non-trivial 
non-negative solution to $\nu=\nu P$ and $m(k)<\infty$ for all $k\in E$, then for any $i\in E$
\[
\lim_{t\to\infty} \mathbb{P}_i\left(Z_t=j\right)=\frac{1}{\nu m}\nu(j)m(j)
\text{,}
\]
provided that $\nu m \coloneqq \sum_{j\in E}\nu(j)m(j) < \infty$.
\end{theorem}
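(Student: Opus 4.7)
The plan is to cast $\mathbb{P}_i(Z_t = j)$ as the solution of a Markov renewal equation and then invoke the key (Markov) renewal theorem. Introduce the Markov renewal measure
$$U_{ij}(t) := \sum_{n=0}^{\infty} \mathbb{P}_i\bigl(X_n = j,\, T_n \leq t\bigr)$$
and the holding-time tail $\overline{F}_j(u) := \mathbb{P}_j(T_1 > u)$. Decomposing the event $\{Z_t = j\}$ according to the index $n$ of the last Markov renewal epoch before time $t$, and using the semi-Markov property (strong Markov at $T_n$), yields the identity
$$\mathbb{P}_i(Z_t = j) = \int_0^t \overline{F}_j(t - s)\, dU_{ij}(s).$$

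Next I would apply a Markov renewal (Blackwell-type) theorem to $U_{ij}$: under irreducibility, recurrence, aperiodicity and $\nu m < \infty$, for every $h>0$,
$$\lim_{t\to\infty}\bigl(U_{ij}(t+h) - U_{ij}(t)\bigr) = \frac{h\,\nu(j)}{\nu m}.$$
The cleanest route is to observe the process only at successive returns to $j$: by the strong Markov property these inter-return intervals are i.i.d., reducing the problem to a classical single-state renewal process whose interarrival mean equals $\nu m / \nu(j)$ (a Kac-type identity derived from $\nu = \nu P$). Recurrence of the embedded chain guarantees that the first hitting time of $j$ starting from $i$ is a.s.\ finite, handling the dependence on the starting state, while aperiodicity of $(X_n,T_n)$ translates into the nonlattice property of the inter-return distribution, which is exactly what Blackwell's theorem requires.

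The tail $\overline{F}_j$ is nonnegative, nonincreasing, and $\int_0^\infty \overline{F}_j(u)\, du = m(j) < \infty$, hence directly Riemann integrable. The key renewal theorem therefore yields
$$\lim_{t\to\infty} \int_0^t \overline{F}_j(t-s)\, dU_{ij}(s) = \frac{\nu(j)}{\nu m}\int_0^\infty \overline{F}_j(u)\, du = \frac{\nu(j)\, m(j)}{\nu m},$$
which is the desired conclusion.

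The main obstacle is the Blackwell-type input for $U_{ij}$ on a countable state space: one has to translate the aperiodicity assumption on the embedded chain into the nonlattice property of the inter-return distribution at $j$, handle the transient excursion from $i$ to $j$ uniformly, and verify that $\nu m < \infty$ suffices to place the return-to-$j$ renewal process in the positive recurrent regime where Blackwell applies. Once these classical ingredients are assembled---or quoted from \cite[Ch.~10]{cinlar75}---the renewal-equation argument above closes immediately.
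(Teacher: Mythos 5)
The paper does not supply its own proof of this theorem; it simply cites \cite[p.~342]{cinlar75}. Your sketch is in fact the standard argument given in that reference: write $\mathbb{P}_i(Z_t=j)=\int_0^t \overline F_j(t-s)\,\dd U_{ij}(s)$ via the Markov renewal property, establish a Blackwell-type asymptotic for $U_{ij}$ by reducing to the ordinary renewal process of successive returns to $j$ (whose mean cycle length is $\nu m/\nu(j)$ by the cycle/occupation-time formula, a Wald-type computation using $\mathbb{E}[T_{n+1}-T_n\mid\mathcal F_n]=m(X_n)$), note that a nonnegative nonincreasing tail with finite integral $m(j)$ is directly Riemann integrable, and close with the key renewal theorem. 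So the approach is the same as the cited source, and the outline is sound.

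Two small clarifications that would tighten the write-up. First, in the paper's definition, ``aperiodic'' is \emph{already} stated in terms of the inter-return times $S^j_1, S^j_2-S^j_1,\dots$ to a fixed state $j$ not living on a lattice, so no further translation from a periodicity notion on $(X_n,T_n)$ is needed; this is exactly the nonlattice hypothesis Blackwell's theorem wants, and by the class-property remark in the paper it suffices to check it at one state. Second, the ``transient excursion from $i$ to $j$'' is handled, as you indicate, by recurrence of the embedded chain: the first continuous-time hitting of $j$ is a.s.\ finite, and a delayed-renewal version of Blackwell/key renewal theorem applies; it is worth noting explicitly that this finiteness does not require a moment bound on the excursion time, only a.s.\ finiteness, which recurrence supplies. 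With those two points made explicit, the argument is complete and coincides with the one the paper defers to.
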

\noindent
The proof can be found in \cite[p. 342]{cinlar75}.

Now we apply these general results to our set-up. 
We define $T_n \coloneqq \tau^{n}$ and $X_n \coloneqq K_{\tau^n}$. By definition, $(X,T)$ is a Markov renewal process with state space $\{2,3,...\}$ and $(K_t)_{t\geq 0}$ is the corresponding Semi-Markov process. Further, the embedded Markov chain $(X_n)_{n\in\mathbb{N}}$ has the transition probabilities
\begin{equation*}
P_{i,j}=\begin{cases}
1-p_i & \text{if } j=(i-1 ) \vee 2, i \geq 2 \text{,} \\
p_i   & \text{if } j=i+1, i\geq 2         \text{,} \\
0     & \text{else,}
\end{cases}
\end{equation*}
where $p_i\coloneqq \mathbb{P}_i(X_1=i+1)$, $i\geq 2$. Note that $p_2=1$ and
 $p_i>0$, $i \geq 2$. We will not investigate whether $p_i < 1$ for all $i \geq 3$. This property certainly holds for large enough $i$ and we will pretend that it holds for 
 all $i \geq 3$. If not, then we can consider a subset $\{k_0,...\}$ on which this property holds. In any case, $(X_n)_{n\in \mathbb{N}}$ is irreducible and therefore also $(K_t)_{t\geq 0}$.
\begin{lemma}
\label{th:vanishing_probability}
For $w>v>1$, $\frac{2}{3}\gamma+1>v$, $w-1>\gamma$ it holds for each $\sigma>0$
\[
p_i \leq 1 -\mathbb{P}\left(A_i\right) \to 0 \text{ as } i \to \infty
\text{.}
\]
\end{lemma}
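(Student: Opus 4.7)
The plan is to observe that under $\mathbb{P}_i$ the event $\{X_1 = i+1\}$ (an upward step of the embedded chain) is contained in the complement of the ``step down'' event $A_i$ from Section~\ref{sec:step_down}, and then invoke Corollary~\ref{co:vanishing_probability} for the limit.

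First I would unpack the definitions of the renewal process under $\mathbb{P}_i$, where $K_0=i$, $\tau^0=0$. Then
\[
\overline{\tau}^1 \;=\; \overline{\tau}_{i+1}\wedge \theta_i, \qquad \underline{\tau}^1 \;=\; \inf\bigl\{t>0 : 2 < \sup_{z\in I(i)} r_t(z) \le R_{i-1}\bigr\},
\]
where $\overline{\tau}_{i+1}$ and $\underline{\tau}_{i-1}$ are the stopping times from Section~\ref{sec:step_down}. For $i\ge 3$, $R_{i-1}\ge 4>2$, and since $t\mapsto \sup_{z\in I(i)} r_t(z)$ is continuous (the local flow is continuous in the initial condition and $I(i)$ is compact) and starts at $R_i>R_{i-1}$, the first time this sup is $\le R_{i-1}$ it equals exactly $R_{i-1}\in(2,R_{i-1}]$. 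Hence $\underline{\tau}^1=\underline{\tau}_{i-1}$.

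By the recursion, $\{X_1=i+1\}=\{\underline{\tau}^1>\overline{\tau}^1\}$. On the event $A_i=\{\underline{\tau}_{i-1}\le \theta_i \le \overline{\tau}_{i+1}\}$ we have $\overline{\tau}^1=\overline{\tau}_{i+1}\wedge\theta_i=\theta_i$ and $\underline{\tau}^1=\underline{\tau}_{i-1}\le \theta_i=\overline{\tau}^1$, so the strict inequality $\underline{\tau}^1>\overline{\tau}^1$ fails. Therefore $A_i\cap\{X_1=i+1\}=\emptyset$ for all $i\ge 3$, which gives
\[
p_i \;=\; \mathbb{P}_i(X_1=i+1) \;\le\; \mathbb{P}(A_i^c) \;=\; 1-\mathbb{P}(A_i).
\]
Corollary~\ref{co:vanishing_probability} then yields $p_i\to 0$ as $i\to\infty$.

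There is no real obstacle; the proof is a careful set-inclusion. The only points that need attention are the identification $\underline{\tau}^1=\underline{\tau}_{i-1}$ (for which sample-path continuity of the radial sup and the bound $R_{i-1}>2$ for large $i$ make the ``$>2$'' constraint in the definition of $\underline{\tau}^1$ harmless) and the matching of $\mathbb{P}_i$ with the unconditional probability appearing in Corollary~\ref{co:vanishing_probability}, which is automatic because the event $A_i$ is defined with the RDS started from $I(i)$ at time zero, exactly as $\mathbb{P}_i$ prescribes.
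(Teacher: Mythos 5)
Your proof is correct and takes essentially the same route as the paper: identify $\underline{\tau}^1$, $\overline{\tau}^1$ (from the jump-process recursion, started at $K_0=i$) with $\underline{\tau}_{i-1}$, $\overline{\tau}_{i+1}\wedge\theta_i$ from Section~\ref{sec:step_down}, observe $\{X_1=i+1\}=\{\underline{\tau}^1>\overline{\tau}^1\}\subset A_i^{\mathrm c}$, and invoke Corollary~\ref{co:vanishing_probability}. You are somewhat more explicit than the paper about the step $\underline{\tau}^1=\underline{\tau}_{i-1}$ (the $``>2"$ constraint being harmless for $i\ge 3$ by continuity of the radial sup), which the paper passes over silently.
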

\begin{proof}
Note the difference between the stopping times $\underline{\tau}^i,\overline{\tau}^{i}$ from the definition of the dominating jump process $(K_t)_{t\geq 0}$ and $\underline{\tau}_i,\overline{\tau}_{i}$ from Section \ref{sec:step_down}.
Take $k=i$ as the initial value for the jump process $(K_t)_{t\geq 0}$.
\begin{equation*}
\begin{split}
p_i &= \mathbb{P}_i(X_1=i+1) = \mathbb{P}_i\left( \underline{\tau}^1>\overline{\tau}^1  \right)\\
& = 1- \mathbb{P}\left( \underline{\tau}_{i-1}\leq \overline{\tau}_{i+1}\wedge \theta_i  \right)\\
&\leq 1- \mathbb{P}\left(A_i\right) \underset{\text{Cor.}~\ref{co:vanishing_probability}}{\longrightarrow} 0 \text{ as } i\to \infty
\text{.}
\end{split}
\end{equation*}
\end{proof}

The Markov chain $(X_n)_{n\in \mathbb{N}}$ is recurrent and has an invariant probability distribution $\nu$ if
\[
\sum_{i=3}^\infty \prod_{j=2}^{i-1} \frac{p_j}{1-p_{j+1}} < \infty
\text{.}
\]
This series is indeed finite by Cauchy's ratio test and Lemma \ref{th:vanishing_probability}. Further,  $\tau=\infty$ a.s., see \cite[Prop. 3.16, p. 327]{cinlar75}. This implies that the (local) flow $\left(r_{s,t}(z),\phi_{s,t}(z)\right)$ is in fact global.

Finally, recall that $T_1=\tau^1$ and $\tau^1 \leq \theta_i \hspace{8pt} \mathbb{P}_i\text{-a.s.}$, which yields 
\[
m(i)=\mathbb{E}_i T_1 \leq \theta_i\leq \theta
\text{.}
\]
Hence, $\nu m= \sum_{i=2}^\infty m(i)\nu(i)\leq \theta< \infty$ and therefore the measure $\mu$ defined via $\mu(i) = \frac{1}{\nu m} \nu(i) m(i) $ is a probability measure.

Now we only need to ensure that the Semi-Markov process is aperiodic in order to be able to apply Theorem \ref{th:semi_covergence}. For any $\delta>0$ there is a $n\in\mathbb{N}$ such that $\theta_n<\delta$. Hence, the period of $n$ must be less than $\delta$. But since periodicity is a class property the Semi-Markov process $(K_t)_{t\geq 0}$ is aperiodic.

\subsection{Existence of an attractor}
Now, we prove the existence of a random attractor. Recall the equivalent criterion from Proposition \ref{th:criterion}.  
\subsubsection{Proof of Theorem \ref{th:main}}
Let $\varepsilon>0$, and $k\ge 2$, and $\overline{R}>0$. Lemma \ref{lemma:dominator} yields
\begin{equation}
\label{eq:proof1}
\mathbb{P}\left( \sup_{z\in [0,R_k]\times[0,\pi]}r_t(z)\leq \overline{R}\right) \geq \mathbb{P}_k\left( R_{K_t+1}\leq \overline{R}\right) =   \mathbb{P}_k\left( K_t\leq \log_2\overline{R} -1\right).
\end{equation}
Pick $\overline{R}=\overline{R}(\varepsilon)>0$, such that 
\begin{equation}
\label{eq:proof3}
\mu\left( \lbrace 2,\,3,\dots, \lfloor \log_2\left(\overline{R}\right)\rfloor -1\rbrace \right)  \geq 1- \frac{\varepsilon}{2} 
\text{.}
\end{equation}
Theorem \ref{th:semi_covergence} says that for each $A\subset \{2,3,...\}$ there is a $t_0(\varepsilon,k)=t_0>0$, such that for all $t\geq t_0$
\begin{equation}
\label{eq:proof2}
\vert \mathbb{P}_k\left(K_t \in A \right) -\mu(A) \vert \leq \frac{\varepsilon}{2} 
\text{.}
\end{equation}
Plugging (\ref{eq:proof2}) and (\ref{eq:proof3}) into (\ref{eq:proof1}) yields
\begin{equation*}
\begin{split}
\mathbb{P}\left( \sup_{z\in [0,R_k]\times[0,\pi]}r_t(z)\leq \overline{R}\right) &\geq  \mathbb{P}_k\left( K_t\leq \log_2\overline{R}-1\right)\\
&\geq \mu\left( \lbrace 2,\dots, \lfloor \log_2\overline{R}\rfloor -1\rbrace \right) -\frac{\varepsilon}{2}\\
&\geq 1-\varepsilon
\text{}
\end{split}
\end{equation*}
for sufficiently large $t$. Therefore, $\varphi$ admits a weak random attractor by Proposition \ref{th:criterion}.

\subsection{Tail estimate}
In this section we  estimate the tails of the diameter of the attractor and of the invariant measure.
Thanks to the previous analysis we have an upper bound for the radius of the attractor $A(\omega)$ in terms of the invariant probability measure $\mu$ of the process $K$.
\begin{corollary}
For parameters $w>v>1$, $\frac{2}{3}\gamma+1>v$, $w-1>\gamma$ and for each $\sigma>0$ there exists some $c$ such that, for all $x\geq 1$,  
\begin{equation*}
\mathbb{P}\left(  \sup_{z \in A(\omega)}|z|\ge x\right) \leq  \sum_{k= \lceil \log_2(x)-1\rceil \vee 2} ^\infty \mu\left(k  \right)\leq 2^{-\frac{1}{2}(\lfloor \log_2(x)-1\rfloor)^2\left(1+\frac{2}{3}\gamma-v\right)+c(\log_2x +1)}
\text{.}
\end{equation*}
In particular, all (polynomial) moments of $\sup_{z \in A(\omega)}|z|$ are finite.
\end{corollary}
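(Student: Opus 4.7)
The proof has two parts: the stochastic domination (the first inequality) and the tail estimate on the stationary distribution of the dominating chain (the second inequality).

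For the first inequality, the plan is to exploit the attracting and strict invariance properties of $A(\omega)$ together with Lemma~\ref{lemma:dominator}. Fix a ball $B=[0,R_k]\times[0,\pi]$. By Lemma~\ref{lemma:dominator}(b), the radius of $\varphi(t,\omega)B$ is almost surely bounded by $R_{K_t+1}$ when the dominating chain is started from $K_0=k$. By the attracting property, $\varphi(t,\vartheta_{-t}\omega)B$ comes arbitrarily close to $A(\omega)$ in probability as $t\to\infty$, while by Theorem~\ref{th:semi_covergence} the law of $K_t$ (started from any $k$) converges to $\mu$. Combining these two convergences via a standard approximation argument, the distribution of $\sup_{z\in A(\omega)}|z|$ is dominated by the stationary law of $R_{K+1}$ (where $K\sim\mu$). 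Taking $k\to\infty$ covers the entire attractor. Therefore $\mathbb{P}(\sup_{z\in A(\omega)}|z|\ge x)\le \mathbb{P}(R_{K_t+1}\ge x)$ whenever $K_t$ is started in $\mu$.

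For the second (and main) inequality, the plan is to compute the stationary measure $\nu$ of the embedded birth-death chain $(X_n)$ explicitly and sum the tail. The chain on $\{2,3,\dots\}$ with transitions $P_{i,i+1}=p_i$, $P_{i,i-1}=1-p_i$ (for $i\ge 3$) is reversible, so detailed balance gives
\begin{equation*}
\nu(i)=\nu(2)\prod_{j=2}^{i-1}\frac{p_j}{1-p_{j+1}}.
\end{equation*}
By Corollary~\ref{co:vanishing_probability} together with Lemma~\ref{th:vanishing_probability}, there are constants $C_1,C_2$ with $p_j\le C_1\,2^{-\alpha j}$ and $1-p_{j+1}\ge C_2$ for all sufficiently large $j$, where $\alpha:=1+\tfrac{2}{3}\gamma-v>0$. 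Hence
\begin{equation*}
\nu(i)\le C_3\,C_4^{\,i}\,2^{-\alpha\sum_{j=2}^{i-1}j}
=C_3\,C_4^{\,i}\,2^{-\tfrac{\alpha}{2}(i-1)i+\alpha}.
\end{equation*}
Since $m(i)\le\theta<\infty$ uniformly, the same bound (up to constants) holds for $\mu(i)=\nu(i)m(i)/(\nu m)$.

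Finally, the proof is closed by bounding the tail. With $i_0:=\lceil\log_2 x-1\rceil$ (note $\lfloor\log_2 x-1\rfloor$ appears in the statement but the two differ by at most one), we have
\begin{equation*}
\mathbb{P}(R_{K_t+1}\ge x)=\sum_{i\ge i_0}\mu(i)
\le C_5\sum_{i\ge i_0}C_4^{\,i}\,2^{-\tfrac{\alpha}{2}i^2+O(i)}.
\end{equation*}
Because the quadratic exponent dominates the linear one, the sum is bounded by its first term up to a constant factor, yielding $2^{-\tfrac{\alpha}{2}i_0^2+c(\log_2 x+1)}$ for some $c$ absorbing all linear-in-$i_0$ contributions. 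This is precisely the stated bound. The moment assertion then follows since $2^{-\alpha(\log_2 x)^2/2}$ decays faster than any power of $1/x$. The main obstacle is being careful that the bound $p_j\le C_1 2^{-\alpha j}$ from Corollary~\ref{co:vanishing_probability} only holds for large $j$; one handles small $j$ by absorbing finitely many factors into the constants $C_3,C_4,c$.
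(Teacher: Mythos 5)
Your proposal is correct and follows essentially the same route as the paper: dominate the attractor's radius by $R_{K_t+1}$ in stationarity (the paper states this without elaboration, citing the "previous analysis", i.e.\ Lemma~\ref{lemma:dominator} and Theorem~\ref{th:semi_covergence}; you spell it out), then use the birth--death detailed-balance product formula for $\nu$, the bounds $p_i\lesssim 2^{-\alpha i}$ from Corollary~\ref{co:vanishing_probability}/Lemma~\ref{th:vanishing_probability} and $m(i)\le\theta$, and sum the super-exponentially decaying tail, dominating it by its leading term. Your attention to the floor-versus-ceiling discrepancy and to the finitely many small indices where the explicit bound on $p_j$ or the lower bound on $1-p_{j+1}$ might need handling are points the paper glosses over (it "pretends" $p_i<1$ for all $i\ge 3$ and would restrict the state space otherwise), but they do not change the argument.
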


\begin{proof}
We start with the first inequality. Since $A(\omega)$ is almost surely compact, it follows that for 
given $\varepsilon>0$ there exists some $k \in \N$, $k \ge 2$ such that $A(\omega)$ is contained  
inside the ball of radius $R_k$ with center 0 with probability at least $\varepsilon$. 
By invariance of $A$ we obtain for $t \ge 0$:
\begin{eqnarray*}
\mathbb{P}\left(  \sup_{z \in A(\omega)}|z|\ge x\right) &=&\mathbb{P}\left(  \sup_{z \in A(\vartheta_t\omega)}|z|\ge x\right)\\ &\le&  \mathbb{P}_k\left( R_{K_t+1}\geq x\right)+\varepsilon\\
&=&\P_k\left( K_t \geq \log_2(x)-1\right) + \varepsilon. 
\end{eqnarray*}
Letting $t \to \infty$ and then $\varepsilon \to 0$, we obtain the first inequality.

Next, we estimate the tails of $\mu$. 
Due to Theorem \ref{th:semi_covergence}, we know that $\mu$ is of the following form
\begin{equation}
\mu(n)= C^{-1} \nu(n) m(n), \qquad n \in \{2,3,...\}\text{,}\notag
\end{equation}
where $C>0$ is a normalization constant, $\nu$ the invariant measure of the embedded Markov chain $(X_n)_{n\in\mathbb{N}}$ and $m(n)$ the mean waiting time in state $n$ as above. As mentioned before, we can simply estimate $m(n)$ by the constant $\theta$ from above. 
It remains to estimate the invariant measure $\nu(n)$ of the embedded Markov chain. 
We abbreviate $\alpha:= \frac{2}{3}\gamma -v +1 >0$. Then, by Lemma \ref{th:vanishing_probability} and Corollary \ref{co:vanishing_probability}, there exists a constant $\overline{C}$ such that
\begin{equation*}
p_i \le \overline{C}  2^{-i\alpha }
\text{.}
\end{equation*}
Hence,
\begin{equation*}
\nu(n) = Z^{-1} \prod_{i=2}^{n-1}\frac{p_i}{1-p_{i+1}} \leq  2^{-\frac{1}{2}n^2\alpha+cn}
\text{,}
\end{equation*}
for some constants $Z$ and $c$.
Therefore, for some possibly different $c$,
\begin{equation*}
\mu(n) \leq 2^{-\frac{1}{2}n^2\alpha + cn}
\text{,}
\end{equation*}
so, again for some possibly different $c>0$,
\begin{equation*}
\begin{split}
\mathbb{P}\left(  \sup_{z \in A(\omega)}|z|\ge x\right) 
&\leq \sum_{k= \lceil \log_2(x)-1\rceil \vee 2} ^\infty \mu\left(k  \right)\\
&\leq 2^{-\frac{1}{2}(\lfloor \log_2(x)-1\rfloor)^2\left(1+\frac{2}{3}\gamma-v\right)+c(\log_2x +1)}
\text{.}
\end{split}
\end{equation*}
The fact that all moments are finite is clear from the tail estimate.
\end{proof}

\begin{remark}
Recall that if the RDS generated by a Markov process admits a (weak) random attractor $A(\omega)$, then the Markov process also admits (at least one) invariant probability measure $\rho$ (but not vice versa) and, for every measurable set $B$, we have $\P\big(A(\omega) \subset B)\leq \rho(B)$ and therefore a tail estimate for the radius of $A$ automatically yields the same tail estimate for $\rho$ by choosing $B$ to be a ball of radius $R$ around the origin.
\end{remark}

\section{Conclusions}

We have proven the existence of a random attractor for a model system which, in the absence of noise, explodes in finite time. We have concentrated on finding conditions for the existence of a random attractor. We suspect that our (sufficient) conditions on the parameters are not sharp. Additionally, we have not emphasized the   stability of the one-point motion, that is, whether the trajectory starting from a single condition blows up or not. It would be interesting to complete the picture with sharp conditions for both the attractor and the stability of the one-point motion. Given the nature of the instability in the example, we suspect they are the same.

The noise only has a substantial effect in a small region in phase space. The resulting dynamics, while still random has a more predictable character than typical SDEs. One concrete manifestation of this is that the system has a fairly regular period. While it is still random, it has a much more deterministic behavior than a typical SDE with a positive rotation number. 
It would be interesting to explore the rotation number and its relationship to the parameters of the problem.


\bigskip

\noindent {\bf Acknowledgments:}   JCM thanks the NSF grant DMS-1613337 for partial support of this work and David Herzog and Avanti Athreya for many useful discussion around this general topic.  ML thanks the Duke Mathematics department for its hospitality and support during 2012/13 when the work was initiated. JCM and MS thank MSRI and the Program on "New Challenges In PDE: Deterministic Dynamics And Randomness In High And Infinite Dimensional Systems" for its hospitality and support during the fall of 2015 where this work continued. All three of us thank the referee for many useful suggestions.

\section*{Appendix A: Proof of Lemma \ref{th:double_integral}}
We prove Lemma \ref{th:double_integral}. Recall that  $A(\phi)=\phi/2+\sin(2\phi)/4$.
\begin{proof}
First note that $A$ is strictly increasing. When $K$ is large we expect the main contribution to come from the integration area where $\beta$ is around $\pi/2$ and $z$ is slightly larger.

Clearly, the integral 
\[
\int_0^\pi\int_{\beta +1}^\infty \e^{-K(A(z)-A(\beta))}\mathrm{d}z\mathrm{d}\beta 
\] 
decays exponentially fast in $K$, so it suffices to consider the remaining integral. We have
\[
\int_0^\pi\int_{\beta}^{\beta +1} \e^{-K(A(z)-A(\beta))}\mathrm{d}z\mathrm{d}\beta = \int_0^\infty \e^{-Kx}\dd m(x),
\] 
where $m$ is the image of Lebesgue measure $\lambda$ on the set $\Delta:=\{(\beta,z): \beta\leq z \leq \beta +1, \,0 \le \beta \le \pi\}$ under the map
$(\beta, z)\mapsto A(z)-A(\beta)$.  We will show that there is a constant $C >0$ such that $m([0,x])\le C\cdot x^{2/3}$ for all $x \ge 0$. Then the claim  follows:
$$
\int_0^\infty \e^{-Kx}\dd m(x) \le \int_0^\infty \e^{-Kx}C\frac 23 x^{-1/3}\,\dd x=K^{-2/3} \frac 23 C \Gamma (2/3).
$$
It remains to show that $m([0,x])\le C\cdot x^{2/3}$ for all (or for all sufficiently small) $x \ge 0$. Define
\begin{align*}
M_y:=\big\{(\beta,z):&\,0 \le \beta \le \pi,\,\beta \le z \le \beta +\kappa y^{2/3}\big\}\\ 
&\cup \big\{(\beta,z):0 \le \beta \in \big[ \frac \pi 2 -\kappa y^{1/3},
  \frac \pi 2 +\kappa y^{1/3}\big],\,z \in \big[\beta,\beta +\kappa y^{1/3}]\big\},
\end{align*}
where $\kappa >0$ will be fixed later. Clearly, 
$$
\lambda (M_y) \le \big(\pi \kappa  + 2 \kappa^2\big) y^{2/3}.
$$
Therefore our  claim follows once we have shown that there exists $\kappa >0$ such that
$$
\{(\beta,z)\in \Delta: A(z)-A(\beta)\le y\} \subseteq M_y,
$$
for all sufficiently small $y \ge 0$ and this can indeed be checked in a straightforward way. 
 \end{proof}

\section*{Appendix B: Proof of Lemma \ref{le:downfall}}

We prove Lemma \ref{le:downfall}.

\begin{proof}
Define
\[T_\varepsilon\coloneqq \inf\left\lbrace t\geq 0 \colon \sup_{0\leq s\leq t} X_s -X_t > \varepsilon \right\rbrace \qquad\text{ with } X_t=\sigma W_t +rt\text{.}\]
Then,
\begin{align*}
 &\mathbb{P}\left(\sup_{0\leq s\leq t\leq T} \left(\sigma (W_t-W_s)-r(t-s)\right)\leq \varepsilon \right)\\
&=\mathbb{P}\left(\sup_{0\leq s\leq t\leq T} \left(\sigma (W_s-W_t)+r(s-t)\right)\leq \varepsilon\right)\\
&=\mathbb{P}\left(\sup_{0\leq s\leq t\leq T} \left(X_s-X_t\right)\leq \varepsilon\right) =\mathbb{P}\left(\sup_{0\leq t\leq T}\left(\sup_{0\leq s\leq t} X_s-X_t\right)\leq \varepsilon\right)\\
&=\mathbb{P}\left(T_\varepsilon\ge T\right)
\text{.}
\end{align*}
In \cite{Taylor75} the author explicitly computed the Laplace transform of $T_\varepsilon$: for $\beta >0$,
\begin{align*}
\mathbb{E}\mathrm{e}^{-\beta T_\varepsilon} &= \frac{\delta\mathrm{e}^{-\Gamma\varepsilon}}{\delta\cosh(\delta\varepsilon)-\Gamma\sinh(\delta\varepsilon)}\text{,}\\
& \text{where } \delta = \sqrt{\left(\frac{r}{\sigma^2}\right)^2+\frac{2\beta}{\sigma^2}} \text{ and } \Gamma =\frac{r}{\sigma^2}
\text{.}
\end{align*}
In the special case $\beta=1$, we obtain
\begin{equation}\label{ch3_LaplaceTransform}
\begin{split}
\mathbb{E}\mathrm{e}^{-T_\varepsilon} &= \frac{\delta\mathrm{e}^{-\Gamma\varepsilon}}{\delta\cosh(\delta\varepsilon)-\Gamma\sinh(\delta\varepsilon)}=\frac{2}{\left(1-\dfrac{\Gamma}{\delta}\right)\mathrm{e}^{(\delta+\Gamma)\varepsilon}+\left(1+\dfrac{\Gamma}{\delta}\right)\mathrm{e}^{(-\delta+\Gamma)\varepsilon}}\\
&\leq \frac{2}{\left(1-\dfrac{\Gamma}{\delta}\right)} \mathrm{e}^{-(\delta+\Gamma)\varepsilon} \leq 2\frac{\sqrt{\Gamma^2+\frac{2}{\sigma^2}}}{\sqrt{\Gamma^2+\frac{2}{\sigma^2}}-\Gamma} e^{-2\varepsilon\Gamma}
\text{.}
\end{split}
\end{equation}
Now, we assume that $r \ge \sqrt{2}\sigma$. Then,  
\[
\sqrt{\Gamma^2+\frac{2}{\sigma^2}}-\Gamma \geq \frac{1}{2\sigma^2\Gamma} \quad \text{ and } \quad \Gamma^2\geq \frac{2}{\sigma^2},
\] 
and therefore 
\begin{align*}
\mathbb{E}\mathrm{e}^{-T_\varepsilon} \leq  2\frac{\sqrt{\Gamma^2+\frac{2}{\sigma^2}}}{\sqrt{\Gamma^2+\frac{2}{\sigma^2}}-\Gamma} e^{-2\varepsilon\Gamma}& \leq 
4\sqrt{2}\Gamma^2 \sigma^2  e^{-2\varepsilon\Gamma} = \frac{4\sqrt{2}}{\sigma^2} r^2   e^{-2\frac{\varepsilon}{\sigma^2} r}.
\end{align*}
We conclude by using Chebyshev's inequality
\begin{align*}
&\mathbb{P}\left(\sup_{0\leq s\leq t\leq T} \left(\sigma (W_t-W_s)-r(t-s)\right) > \varepsilon\right)
=\mathbb{P}\left(T_{\varepsilon}< T\right)=\mathbb{P}\left(\mathrm{e}^{- T_{\varepsilon}}> \mathrm{e}^{- T}\right)\\
&\leq \mathrm{e}^{T}\mathbb{E}\mathrm{e}^{-T_{\varepsilon}}
\leq \frac{4\sqrt{2}}{\sigma^2}\mathrm{e}^{T} r^2 \mathrm{e}^{-2\frac{\varepsilon}{\sigma^2} r}
\text{.}
\end{align*}
\end{proof}

\end{document}